\newtheorem{theorem}{theorem}[section]
\newtheorem{prop}[theorem]{Proposition}
\newtheorem{lem}[theorem]{Lemma}
\newtheorem{example}[theorem]{Example}
\newtheorem{remark}[theorem]{Remark}
\newtheorem{rem}[theorem]{Remark}
\newtheorem{defin}[theorem]{Definition}
\newtheorem{theor}[theorem]{Theorem}
\newcommand{\downarrowright}[1]{\downarrow
\rlap{\raise0.1cm\hbox{$\scriptstyle{#1}$}}}
\newcommand{\downarrowleft}[1]{\rlap{\kern-0.2cm
\raise0.1cm\hbox{$\scriptstyle{#1}$}}\downarrow}
\newcommand{\uparrowright}[1]{\uparrow
\rlap{\lower0.1cm\hbox{$\scriptstyle{#1}$}}}
\newcommand{\uparrowleft}[1]{\rlap{\kern-0.2cm
\lower0.1cm\hbox{$\scriptstyle{#1}$}}\uparrow}
\begin{document}
\title{Covering-based numbers related to the LS-category of finite spaces\\
}

\author{M. C\'ardenas, R. Flores, A. Quintero and M.T. Villar-Li\~{n}\'an}

\maketitle

\begin{abstract}

In this paper, Lusternik-Schinrelmann and geometric category of finite spaces are considered. We define new numerical invariants of these spaces derived from the geometric category
and present an algorithmic approach for its effective computation. The analysis is undertaken by combining homotopic features of the spaces, algorithms and tools from the theory of graphs and hypergraphs. We also provide a number of examples.

\end{abstract}

\section{Introduction}
\label{intro}

  In \cite{Alex37}, P. Alexandrov observed that finite spaces are the natural topological setting for ordered structures on finite sets. More precisely, the class of finite of posets can be identified with the class of finite $T_0$-spaces.
 \par
 Though these spaces are not relevant from the metric viewpoint, they are far from being irrelevant in Algebraic Topology: in fact, M. McCord proved in \cite{mcCord66} that any compact polyhedron is weakly homotopy equivalent to a finite $T_0$-space. In particular, weak homotopy types of finite $T_0$-spaces coincide with homotopy types of compact polyhedra.
 \par
 After years of oblivion, finite spaces have been recently considered with renewed interest: see \cite{may3} and \cite{Ba11} as comprehensive references and
 \cite{clader09}, \cite{kukiela10} and \cite{raptis10} for more specific aspects of the homotopy theory of such spaces. In particular, the notion of Lusternik-Schnirelmann category (LS-category) in the context of finite spaces was introduced in \cite{vilches15} in connection with the so-called
simplicial LS-category of a simplicial complex, and also in \cite{Tanaka} in connection with the approach to the simplicial complexity in  \cite{Simplicialcomplexity} via finite spaces. The present paper goes further in the study of numerical invariants for the class of finite spaces on its own.

Recall that given a topological space $X$, the LS-category $cat(X)$ of $X$ is defined as the  minimal number of open sets that are contractible in $X$ and cover $X$, while the geometric category $gcat(X)$ is the minimal number of contractible open sets that cover $X$. The latter is not a homotopy invariant, and this leads to the definition of the strong category $Cat(X)$ of $X$ as the smallest value of the geometric category in the homotopy type of $X$. See Section \ref{notation} for more details of these definitions, and Table \ref{tab:coches} for a brief summary of the different definitions of category that appear in the paper.

Besides $cat$ and $Cat$, we introduce other covering-based numbers specifically for the class  of finite $T_0$-spaces. Properties of these numbers, as well as for $cat$ and $Cat$, are given, including the special features of spaces whose Hasse diagrams have height 1. After this analysis, we propose a systematic procedure which allows us to compute or bound some of these numbers using algorithms based on the structure of the space. Attention is paid, in particular, to the complexity of the calculations. It should be pointed out that the lack of an analog to such a procedure in the case of $cat(X)$ makes more difficult the development of an algorithmic approach for this case. See the explanation at the beginning of Section \ref{monocomp}.

Now we describe with detail the contents of the paper. The preliminary Section \ref{notation} contains the necessary definitions and results of the topology of finite spaces and some observations about their LS category. In Section \ref{gcat-cat} we study the behaviour of the function $gcat$ on the homotopy type of a finite space $X$; in particular it is showed that the gap between $cat(X)$ and $gcat(X)$ can be arbitrarily large for finite spaces. Furthermore, the maximum of $gcat$ on the homotopy type of $X$ coincides with its value on the core of $X$. This provides a specific LS-type invariant for finite spaces ($Cat_u$). Prime open sets of a finite space are defined in Section \ref{princat} where it is proved that the geometric category given by them yields a new numerical invariant of LS-type in the class of finite spaces ( $gcat_p$).  Section \ref{height2} is devoted to the finite spaces of height 1, which reveal interesting features: their category is related with the arboricity of the graph given by the Hasse diagram; moreover, all numerical invariants considered in the paper agree on them, and this number is in turn bounded above by the arboricity of a canonically associated multigraph.

The remainder of the paper is devoted to develope a strategy to compute $gcat(X)$ and the other related invariants $Cat_u$ and $gcat_p$ for any finite space $X$. In Section \ref{compalg} we describe a preliminary algorithm that decides if a finite space $X$ is contractible or not, and also identifies the core of $X$. It is checked that the time complexity of the algorithm is at most quartic in the number of points of $X$. Using this algorithm, different procedures (deterministic and heuristic) are designed in Section \ref{identify} in order to describe the compatibility structure of $X$ with respect to $gcat(X)$. As a byproduct, bounds for $gcat(X)$ can be obtained in polynomial time. In Section \ref{monocomp} we generalize the notion of compatibility structure in terms of Boolean functions and define a category associated to such a structure (being natural examples the categories studied in our paper). Moreover, we show that a compatibility structure of a finite space $X$ always gives rise to a hypergraph, in such a way that the associated category corresponds to the covering number of the hypergraph. We conclude by discussing the problem of representing a compatibility structure in a finite space as the compatibility structure of a finite space of height 1.

\textbf{Notation.} We warn the reader that we adopt here the classical definition of LS-category (and its variations); that is, the precise number of open sets involved in the definition instead of the normalized definition which is given by this number minus one. Moreover, the finite spaces that will appear in the text will always be $T_0$, although sometimes this separation condition will not be explicitly quoted.

\textbf{Data availability statement.} Data sharing not applicable to this article as no datasets were generated or analysed during the current study.

\section{Preliminaries}

\label{notation}

Finite spaces are examples of \emph{Alexandrov spaces}; that is, topological spaces whose points admit a minimal open neighbourhood or, equivalently, whose topologies are closed under arbitrary intersections. If $X$ is an Alexandrov space, the minimal open set containing $x\in X$ is denoted $U_x$. Then an ordering can defined on $X$ by setting $x\leq y$ if $U_x\subseteq U_y$. Alexandrov showed in \cite{Alex37} that this ordering yields an equivalence between the class of Alexandrov $T_0$-spaces and the class of posets. Moreover, the homotopy class of an arbitrary Alexandrov space can be represented by an Alexandrov $T_0$-space.
\par
A finite poset $X = (X,\leq)$ is usually represented by its \emph{Hasse diagram}, which turns to be the transitive reduction of $X$. Recall that the  \emph{transitive reduction} of a poset $X$ is the acyclic directed graph whose vertex set is $X$ and a directed edge is drawn from $x$ to $y$ when $x < y$ and there is no $z$ with $x < z < y$ (see \cite{Ba11} or \cite{may3}). In this way, $X$ is then recovered as the transitive closure of the reflexive antisymmetric non-transitive relation defined by the edges of its Hasse diagram, termed the \emph{covering relation} of the poset $X$. Notice that such a diagram, and more generally any acyclic directed graph, admits a decomposition by levels. Namely, minimals of $X$ are placed at level zero and the level assigned to a non-source element $a$ is the number of edges of a maximal directed path from a source to $a$. The \emph{height of $X$} is the maximal height of its elements.
\par
Henceforth we will identify a finite  $T_0$-space $X$  with the Hasse diagram of the corresponding poset without further comment. So, by the height of $X$ we will mean the height of its Hasse diagram.
\par
It is worth pointing out that after removing a point $x\in X$ from a finite space $X$, the resulting Hasse diagram of $X-\{x\}$ is not in general a subgraph of the original one. In fact all edges incident at $x$ in the latter disappear in the former;  and moreover, any pair of edges in the Hasse diagram of $X$ corresponding to $y < x < z$ (if any) is replaced by a directed edge $y < z$ in the Hasse diagram of $X-\{x\}$.
\par
The Hasse diagram of a poset is the $1$-skeleton of the so-called \emph{order complex} of $X$, denoted $\mathcal{O}(X)$. This is the simplicial complex  with the  elements of $X$ as vertices and the totally ordered subsets of $X$ as simplices. Conversely, any simplicial complex $K$ has associated its face poset $F(K)$ consisting of the set of simplices of $K$ ordered by the face relation. McCord theorem in \cite{mcCord66} shows that there exist weak homotopy equivalences $\psi_X: X \to |\mathcal{O}(X)|$ and  $\varphi_K: |K| \to F(K)$ where $|K|$ denotes the underlying polyhedron of the complex $K$. The existence of such weak equivalences show that homotopy types of polyhedra correspond to weak homotopy types of finite $T_0$-spaces.
\par
The natural order on a finite $T_0$-space $Y$ induces an order on the sets of maps $f: X\to Y$ by setting $f\leq g$ if $f(x) \leq  g(x)$ for all $x\in X$. In fact, this order characterizes the homotopies between maps, as proved in  \cite[Corollary 1.2.6]{Ba11}. Namely, two maps $f,g: X\to Y$ are homotopic if and only if there exists a sequence of maps
$f_i: X \to Y$ ($0\leq i\leq m$) such that $f_0 = f$, $f_m = g$ and $f_i$ and $f_{i+1}$ are related; that is, $f_i \leq f_{i+1}$ or $f_i \geq f_{i+1}$. Moreover, by (\cite[Lemma 2.1.1]{Ba11}), we can assume in addition that  there exist points $x_0, \dots, x_m \in X$ such that $f_{i-1} =f_i$ on $X-\{x_{i-1}\}$ and $ f_{i-1}(x_{i-1}) < f_i(x_{i-1})$ or $f_{i-1}(x_{i-1}) > f_i(x_{i-1})$ for $1 \leq i \leq m -1$.
\par
Recall that (co)homology and other algebraic constructions are weak homotopy invariants. In particular, the (co)homology of any compact polyhedron can be realized as the (co)homology of a finite $T_0$-space. However, the LS-category $cat(X)$ of a space $X$ is a homotopy invariant but not a weak homotopy invariant.  Recall that the number $cat(X)$ is defined as the smallest integer $n$ for which there exists an open covering  $\{U_i\}_{i=1}^n$ of $X$ such that for each $i$ the inclusion $U_i\subseteq X$ is homotopically trivial. If such a number does not exist, it is written $cat(X) = \infty$.
\par
\par
Since the LS-category is not a weak homotopy invariant it should not be expected that McCord's theorem yields the equality between $cat(X)$ and $cat(|\mathcal{O}(X)|)$. Indeed, by iterating the face order operator $F(-)$ and the order complex operator $\mathcal{O}(-)$ we get new finite spaces $sd^nX = (F\mathcal{O})^n(X)$, termed the \emph{iterated subdivisions} of $X$ ($n\geq 0$, $sd^0X = X$), such that their corresponding order complexes $\mathcal{O}(sd^nX)$ coincides with the barycentric subdivision $sd^n \mathcal{O}(X)$. Then, as it was observed in \cite{vilches15} the following sequence of inequalities holds:
\begin{equation}\label{desig1}
cat(X)\geq scat(\mathcal{O}(X))\geq cat(sd X)\geq scat(sd \mathcal{O}(X))\geq cat(sd^2 X)\geq \dots \geq cat(|\mathcal{O}(X)|).
\end{equation}
Here $scat(K)$ stands for the \emph{simplicial category} of a simplicial complex $K$; that is, the smallest integer $n$ for which there exists a covering  $\{K_i\}_{i=1}^n$ of $K$ by subcomplexes such that for each $i$ the inclusion $K_i\subseteq K$ is in the contiguity class of a constant map. All these notions in the simplicial setting can be found in \cite{Sco13} and \cite{vilches15}).
\par
It is worth pointing out that given a finite $T_0$-space $X$, we can derive from the sequence of inequalities in (\ref{desig1}) two new invariants of $X$. Namely,

\begin{defin}
The \emph{weak LS-category} of $X$ is the LS-category of the underlying polyhedron of its order complex; that is, the number $cat_w(X) = cat(|\mathcal{O}(X)|)$.
Similarly we can define the \emph{stable LS-category} of $X$ as the number $cat_s(X) = \min \{cat(sd^k X); k\geq 0\}$.
\end{defin}

\begin{prop}
The weak LS-category is a weak homotopy invariant, while the stable LS-category is a homotopy invariant.
\end{prop}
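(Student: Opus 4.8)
The plan is to prove the two assertions separately; each reduces to a standard invariance property of ordinary LS-category together with one structural input about the operators $\mathcal{O}(-)$ and $F(-)$.

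For $cat_w$: let $X$ and $Y$ be finite $T_0$-spaces with the same weak homotopy type. By McCord's theorem, $X$ is weakly homotopy equivalent to $|\mathcal{O}(X)|$ and $Y$ to $|\mathcal{O}(Y)|$, so $|\mathcal{O}(X)|$ and $|\mathcal{O}(Y)|$ have the same weak homotopy type as well (if one assumes only a single weak equivalence $X\to Y$, it induces one $|\mathcal{O}(X)|\to|\mathcal{O}(Y)|$ by naturality of McCord's map and the two-out-of-three property). Being polyhedra, $|\mathcal{O}(X)|$ and $|\mathcal{O}(Y)|$ are CW complexes, so by Whitehead's theorem this weak equivalence is a genuine homotopy equivalence. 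Since $cat(-)$ is a homotopy invariant of topological spaces, $cat_w(X)=cat(|\mathcal{O}(X)|)=cat(|\mathcal{O}(Y)|)=cat_w(Y)$. This half is essentially formal.

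For $cat_s$: first note that $cat_s(X)$ is a well-defined positive integer, since $cat(sd^k X)<\infty$ for every $k$ (a finite space is covered by its finitely many minimal open sets $U_x$, each contractible and hence included null-homotopically) and, by the chain of inequalities (\ref{desig1}) applied with $X$ replaced by $sd^m X$, the sequence $\left(cat(sd^k X)\right)_{k\ge 0}$ is non-increasing, so it is eventually constant and equal to $cat_s(X)$. Now take homotopy equivalent finite $T_0$-spaces $X$ and $Y$; I claim $cat(sd^k X)=cat(sd^k Y)$ for all $k$, which gives $cat_s(X)=cat_s(Y)$ on passing to minima. By homotopy invariance of $cat(-)$ and induction, it suffices to prove the key lemma: $X\simeq Y$ implies $sd X\simeq sd Y$. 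By the classification of homotopy types of finite $T_0$-spaces (see \cite{Ba11}), $X\simeq Y$ means $X$ and $Y$ have homeomorphic cores $X_c\cong Y_c$; as $sd=F\mathcal{O}$ is a functor, $sd X_c\cong sd Y_c$, so it is enough to show $sd X\simeq sd X_c$ for an arbitrary finite space $X$. Writing the passage from $X$ to $X_c$ as a sequence of beat-point deletions, it suffices to treat one step: if $x$ is a beat point of $X$ and $X'=X\setminus\{x\}$, then $x$ is a dominated vertex of $\mathcal{O}(X)$ — for a down-beat point it is dominated by the maximum of $U_x\setminus\{x\}$, and dually for an up-beat point — so $\mathcal{O}(X)\searrow\searrow\mathcal{O}(X)\setminus x=\mathcal{O}(X')$ is an elementary strong collapse of simplicial complexes; and a strong collapse of complexes induces one of the associated face posets, hence $sd X=F(\mathcal{O}(X))\searrow\searrow F(\mathcal{O}(X'))=sd X'$, and in particular $sd X\simeq sd X'$. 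The two compatibility facts used here — strong collapses of finite spaces induce strong collapses of order complexes, and strong collapses of complexes induce strong collapses of face posets — belong to the basic theory of strong homotopy types; see \cite{Ba11}.

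The main obstacle is precisely this key lemma, namely that $sd$ preserves the homotopy type (and not merely the weak homotopy type) of finite spaces: one cannot simply use $|\mathcal{O}(sd^k X)|\cong|\mathcal{O}(X)|$, because that only shows $sd^k X$ and $sd^k Y$ are weakly homotopy equivalent, which is useless since $cat$ is not a weak homotopy invariant — one genuinely needs the finer strong-collapse structure to survive both $\mathcal{O}(-)$ and $F(-)$. Everything else (monotonicity and finiteness of the sequence, and the homotopy and weak-homotopy invariance of $cat(-)$) is routine. Finally, it is worth stressing that the argument does not show $cat_s$ to be a weak homotopy invariant, nor should one expect $cat_s(X)=cat_w(X)$: that equality would amount to the (open) assertion that $scat$ of the iterated barycentric subdivisions of $\mathcal{O}(X)$ converges to $cat(|\mathcal{O}(X)|)$.
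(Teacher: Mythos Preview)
Your proof is correct and follows essentially the same strategy as the paper's: for $cat_w$ you use McCord plus Whitehead where the paper cites \cite[Corollary 1.4.18]{Ba11} (same content), and for $cat_s$ both arguments reduce to the fact that $sd$ preserves the homotopy type of finite $T_0$-spaces. The only difference is that the paper invokes this functorially via \cite[Theorem 5.2.1]{Ba11} --- a homotopy equivalence $f$ gives a strong equivalence $\mathcal{O}(f)$, and $F$ carries strong equivalences to homotopy equivalences --- whereas you unpack it object-wise through cores and single beat-point deletions; both routes rest on the same strong-collapse machinery from \cite{Ba11}.
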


\begin{proof}
If $f: X \to Y$ is a weak homotopy, then the induce simplicial map $\mathcal{O}(f): |\mathcal{O}(X)| \to |\mathcal{O}(Y)|$ is a homotopy equivalence (see \cite[Corollary 1.4.18]{Ba11}) and so $cat_w(X) = cat_w(Y)$. Besides, if $f$ is a homotopy equivalence then  $\mathcal{O}(f)$ is a strong equivalence and then $sd(f) = \mathcal{O}F(f): sd X = \mathcal{O}F(X) \to \mathcal{O}F(Y) = sd Y$ is again a homotopy equivalence by \cite[Theorem 5.2.1]{Ba11}. Therefore, by iterating the argument, we get $cat(sd^k X) = cat(sd^k Y)$ for all $k\geq 0$ and so $cat_s(X) = cat_s(Y)$.
\end{proof}

\begin{rem}\label{Ex1}
Obviously one has the inequalities $cat_w(X) \leq cat_s(X) \leq cat(X)$. Moreover, the gap between $cat(X)$ and $cat_w(X)$ can be arbitrarily large. Indeed, for each $n\geq 2$, let $X$ be the finite space of height 1 whose Hasse diagram is the bipartite graph $K(n,2)$ as shown in the left of Figure \ref{LUZ1wallet}. It is readily checked that $cat(X) = n$ since any open set with two or more maximal points is not contractible in $X$,
 while it is well known that $cat_w(X) = cat(|\mathcal{O}(X)|) = 2$.


In fact the same example shows that the gap between $cat(X)$ and $scat(\mathcal{O}(X))$ (and so $cat_s(X)$) can also be arbitrarily large since $scat(\mathcal{O}(X)) = 2$ according to \cite{vilches15}.
\par
Notice also that the space $X$ in \cite[Example 4.2.1]{Ba11}  (see the at the right in Figure \ref{LUZ1wallet}) is not contractible but $|\mathcal{O}(X)|$ is. Hence $sd^k X$ is not contractible for all $k\geq 0$ by \cite[Corollary 5.2.7]{Ba11}, whence $cat_s(X) \geq 2  >  cat_w(X) = 1$. Furthermore, one easily checks that $cat(X) = 2$ and so $cat(X) = cat_s(X) = 2 > 1 = cat_w(X)$.

\begin{figure}[ht]
	\centering
	\psfrag{a0}{$a_0$}\psfrag{b0}{$b_0$}\psfrag{x1}{$x_1$}\psfrag{x2}{$x_2$}\psfrag{x3}{$x_3$}\psfrag{xn2}{$x_{n-2}$}\psfrag{xn1}{$x_{n-1}$}\psfrag{xn}{$x_{n}$}\psfrag{l}{$\cdots$}\psfrag{d}{$\vdots$}\psfrag{a1}{$a_1$}\psfrag{b1}{$b_1$}\psfrag{ah1}{$a_{h-1}$}\psfrag{bh1}{$b_{h-1}$}\psfrag{ah2}{$a_{h-2}$}\psfrag{bh2}{$b_{h-2}$}\psfrag{ah3}{$a_{h-3}$}\psfrag{bh3}{$b_{h-3}$}	
	\psfig{figure=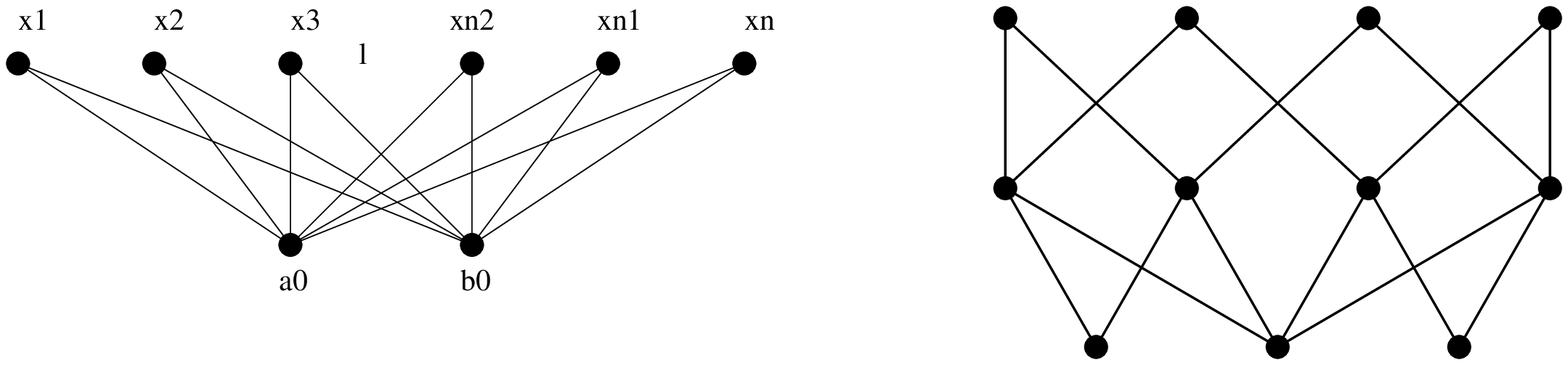,height=2cm}
	\caption{}
	\label{LUZ1wallet}
\end{figure}
\end{rem}
\begin{center}
{\bf Open Question:} Find a finite $T_0$-space $X$ with  $cat(X) > cat_s(X) > cat_w(X)$.
\end{center}
If we consider the LS category of maps, Tanaka shows in \cite{tanaka2} the equality $cat_w(X) = cat_s(id)$, where, for a map between finite spaces $f: X \to Y$, $cat_s(f) = \min\{cat(f\circ \tau^k); k\geq 0\}$ with $\tau^k$  the iterated subdivision map $\tau^k: sd^kX \to sd^{k-1}X \to \dots \to sdX \to X$.

\section{The geometric category and the strong category of finite spaces}\label{gcat-cat}

If we use open sets which are contractible in themselves, we get the \emph{geometric category} $gcat(X)$ of $X$, which is the minimal $n\geq 1$ such that there is a covering of $X$ with $n$ contractible open sets. It turns out that, in general,  the geometric category is not a homotopy invariant. However, it is defined a new homotopy invariant, termed the \emph{strong category} of $X$ and denoted $Cat(X)$,  by considering the minimal value of $gcat(Y)$ for all spaces $Y$ with the homotopy type of $X$.  We refer to \cite{CLOT03} for a comprehensive treatment of these (and others) numerical invariants in Homotopy Theory.
\par
A distinctive property of finite spaces is the fact that the homotopy type of any finite $T_0$-space is represented up to homeomorphism by a minimal space. i.e. a space without beat points. Recall that a point $x\in X$ is termed an \emph{up beat point} if the set of points which are greater than $x$ has a minimum. Similarly, $x$ is said to be a \emph{down beat point} if the set of points below it has a maximum. If we do not distinguish if $x$ is an up or a down beat point, we simply say that $x$ is a beat point. It is immediate that if $x \in  X$ is a beat point, there exists $y \in X$, $y \neq x$, such that any point which is comparable with $x$ is also comparable with $y$. In \cite{Stong66}, R. Stong showed that for any beat point $x \in X$, the inclusion
$X-\{x\}\subseteq X$ is a strong deformation retract and that after removing the beat points, one at a
time, we obtain a strong deformation retract of X with no beat points, called the \emph{core} of $X$, which is unique up to homeomorphism. A $T_0$-space is called \emph{minimal} if it has no beat point, and the homotopy type of a finite $T_0$-space contains a unique minimal space up to homeomorphism (see \cite{Ba11}).

\par
As the LS-category is a homotopy invariant, it will suffice to compute it for minimal spaces. In contrast, as observed in \cite{vilches15}, the removal of beat points may increase the geometric category. In fact, the following proposition shows that this may occur only for up beat points.

\begin{prop}\label{removingdown}
	Let $b$ be a down beat point in a finite $T_0$-space $X$. Then $gcat(X)=gcat(Y)$ for $Y=X-\{b\}$.
\end{prop}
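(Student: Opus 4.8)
The plan is to prove the two inequalities $gcat(Y)\le gcat(X)$ and $gcat(X)\le gcat(Y)$ separately, using Stong's strong deformation retraction attached to the beat point as the single organizing tool. First I would record the relevant structure. Since $b$ is a down beat point, the set $\hat{U}_b$ of points strictly below $b$ has a maximum $b'$; consequently $\hat{U}_b = U_{b'}$, the map $r\colon X\to Y$ which is the identity on $Y$ and sends $b$ to $b'$ is order-preserving hence continuous, and it is precisely the retraction whose associated inclusion $Y = X-\{b\}\hookrightarrow X$ is a strong deformation retract. I would also note that $r\le \mathrm{id}_X$ pointwise, so that, using the description of homotopies of maps between finite spaces via comparable maps recalled in Section~\ref{notation}, the homotopy $\mathrm{id}_X\simeq \iota_Y\circ r$ (with $\iota_Y\colon Y\hookrightarrow X$) can be chosen to move each point $x$ only among the two values $x$ and $r(x)$ (concretely: $H(x,t)=r(x)$ for $t<1$ and $H(x,1)=x$ is continuous).

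For $gcat(Y)\le gcat(X)$ I would take a contractible open cover $\{U_i\}$ of $X$ realizing $gcat(X)$ and set $V_i = U_i\cap Y$. These are open in $Y$ and cover $Y$. If $b\notin U_i$ then $V_i=U_i$ is contractible. If $b\in U_i$, then openness of $U_i$ forces $U_b\subseteq U_i$, hence $\hat{U}_b=U_{b'}\subseteq U_i$; therefore $b$ is still a down beat point of $U_i$ (with the same witness $b'$), so $V_i = U_i-\{b\}$ is a strong deformation retract of $U_i$ and hence contractible. Thus $\{V_i\}$ is a contractible open cover of $Y$ of the right size.

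For $gcat(X)\le gcat(Y)$ I would instead take a contractible open cover $\{V_i\}$ of $Y$ realizing $gcat(Y)$ and set $U_i = r^{-1}(V_i)$. These are open in $X$ (preimages under the continuous map $r$) and cover $X$ (as $r$ is surjective). It remains to see each $U_i$ is contractible: the corestriction $r\colon U_i\to V_i$ and the inclusion $V_i\hookrightarrow U_i$ (legitimate since $v\in V_i\Rightarrow r(v)=v\in V_i$) compose to the identity on $V_i$, while the other composite is the self-map $x\mapsto r(x)$ of $U_i$, which is homotopic to $\mathrm{id}_{U_i}$ by restricting the homotopy from the first paragraph — this restriction stays inside $U_i$ because it moves $x$ only among $x$ and $r(x)\in V_i\subseteq U_i$. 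Hence $U_i\simeq V_i$ is contractible. Combining the two inequalities gives $gcat(X)=gcat(Y)$.

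The step that requires genuine care — the only real obstacle — is the direction $gcat(X)\le gcat(Y)$. The naive attempt, namely to keep each $V_i$ and simply enlarge the one containing $b'$ to $V_i\cup\{b\}$, does cover $b$ and that enlarged set remains open in $X$ and contractible; but the remaining $V_i$ that happen to contain points lying above $b$ are then no longer open in $X$, and forcing them open (by adding $b$ and hence $U_{b'}$, etc.) can destroy contractibility and is awkward to control. Pulling back along the retraction $r$ circumvents this: it produces genuinely open sets in $X$ for free, and the cost is merely the routine verification that $r^{-1}(V_i)$ is homotopy equivalent to $V_i$, which is exactly what the ``two-value'' form of the homotopy $\mathrm{id}_X\simeq\iota_Y r$ is designed to supply.
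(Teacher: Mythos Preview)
Your proof is correct. The direction $gcat(Y)\le gcat(X)$ is handled exactly as in the paper: intersect each $U_i$ with $Y$ and observe that $b$ remains a down beat point of any $U_i$ containing it.

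For $gcat(X)\le gcat(Y)$ you take a genuinely different route. The paper argues by hand: it selects a single $V_i$ (one containing $c=b'$ if $b$ is maximal, or one containing some maximal $m>b$ otherwise), enlarges it to $V_i'=V_i\cup\{b\}$, checks that this is open in $X$ and that $b$ is a down beat point of $V_i'$, and declares $\{V_1,\dots,V_i',\dots,V_n\}$ to be an open cover of $X$. Your pullback $U_i=r^{-1}(V_i)$ instead enlarges \emph{every} $V_i$ containing $b'$, and openness in $X$ comes for free from continuity of $r$. This is more than a cosmetic difference: the obstacle you flag in your final paragraph---that a $V_j$ containing a point $y>b$ fails to be open in $X$ once $b$ is absent---is in fact not addressed by the paper's argument as written, which modifies only one set. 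Your pullback handles this automatically, since any such $V_j$ must contain $b'$ (as $b'<b<y$ and $V_j$ is open in $Y$) and therefore acquires $b$ under $r^{-1}$. So your approach both streamlines the verification and quietly repairs a small gap in the paper's argument.
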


\begin{proof}
	We may assume that $X-Y$ reduces to a down beat  point $b$.
	\par
	Let $\mathcal{V} = \{V_1, \dots, V_n\}$ be an open covering of $Y$ consisting of contractible sets. Since $b$ is a down beat point, let  $c$ be the maximum of the points below $b$ in $X$. Let $m\in Max(X)$ with $b\leq m$. If $b = m$, then choose $V_i$ with $c\in V_i$. The open set $V'_i = V_i \cup \{b\}$ is contractible in $X$ ($b$ is a down beat point in $V'_i$) and $\{V_1, \dots, V'_i, \dots V_n\}$ covers $X$. Otherwise, if $b < m$, take $V_i$ with $m\in V_i$ and so $U^Y_m \subseteq V_i$, where $U^Y_m$ is the minimal open set of $m$ in $Y$. Then $U^X_m = U^Y_m \cup\{b\}$ and $V'_i = V_i\cup \{b\}$ is an open set in $X$. Moreover, $b$ remains a down beat point in $V'_i$, and so by replacing $V_i$ by $V'_i$ in $\mathcal{V}$ we get an open covering of $X$ consisting of contractible sets. This shows that $gcat(X)\leq gcat(Y)$.
	
	On the other hand, it is easy to see that given any covering $\mathcal{W}=\{W_1, \dots, W_n\}$ of $X$ by contractible open sets one can obtain a covering of $Y$ by contractible open sets of the same cardinality. Indeed, if $b\in W_i$ then $b$ is also a down beat point of $W_i$ and hence $W_i'=W_i-\{b\}$ is an open contractible open set in $Y$. Hence $gcat(X)=gcat(Y)$.
\end{proof}
In contrast, as mentioned above, removing up beat points may increase $gcat$. See Example \ref{Expalo}(2) below.

\begin{example}\label{Expalo}
\begin{enumerate}
\item The following example shows a finite $T_0$-space such that $gcat(X) = 3$ but $cat(X) = 2$ since the union $U_a\cup U_c$, although it is not contractible in itself, it is contractible in $X$.

\begin{figure}[ht]
	\centering
	\psfrag{a}{$b$}\psfrag{b1}{$a$}\psfrag{b2}{$c$}\psfrag{c1}{$c_1$}\psfrag{c2}{$c_2$}\psfrag{d1}{$d_1$}\psfrag{d2}{$d_2$}
	\psfrag{e1}{$e_1$}\psfrag{e2}{$e_2$}
	\psfig{figure=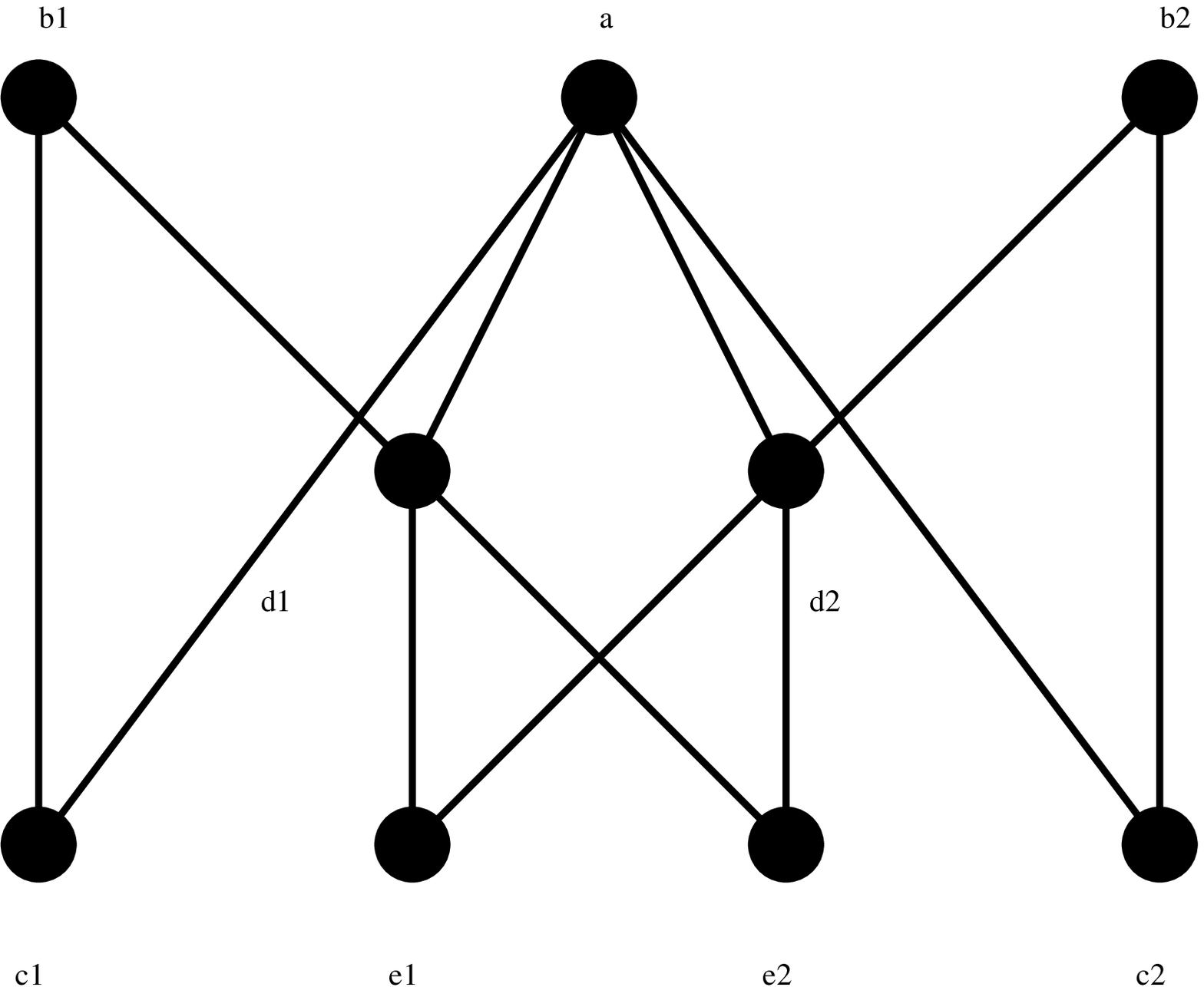,height=2.5cm}
	\caption{}
	\label{counter}
\end{figure}

\item The gap between $gcat$ and $cat$ can be arbitrarily large. Indeed, given any integer $s\geq 1$, for the space $X(s)$ depicted in Figure \ref{dibgcat0},  whose core is showed in Figure \ref{dibgcat0},  one checks $cat(X(s))=gcat(X(s))=2$ since the contractible open sets $U_b$ and $U_e\cup U_{j_0}\cup\cdots\cup U_{j_s}$ cover $X(s)$. However by deleting the up beat point $e$ from $X(s)$, for $Y(s)=X(s)-\{e\}$ in Figure \ref{dibgcat0} it is verified that $cat(Y(s))= cat(X(s) =2$ while $gcat(Y(s))=s+2$ since the open sets $U_{j_t}$ are disjoint from each other and $U_b\cup U_{j_t}$ are not contractible, for $t=0,\dots , s$. Notice that $gcat(Y(s)) = gcat(Z(s))$ since the removing any of the up beat points $c_0,\dots ,c_s$ and $l_0, \dots, l_s$ does not change $gcat$.
\begin{center}
	\begin{figure}[ht]
		\centering
		\psfrag{b}{$b$}\psfrag{b0}{$b_0$}\psfrag{bs}{$b_s$}\psfrag{c}{$c$}\psfrag{c0}{$c_0$}\psfrag{cs}{$c_s$}\psfrag{d}{$d$}\psfrag{d0}{$d_0$}\psfrag{ds}{$d_s$}\psfrag{e}{$e$}\psfrag{e0}{$e_0$}\psfrag{es}{$e_s$}\psfrag{j0}{$j_0$}\psfrag{js}{$j_s$}\psfrag{l0}{$l_0$}\psfrag{ls}{$l_s$}\psfrag{m0}{$m_0$}\psfrag{ms}{$m_s$}\psfrag{x}{$X(s)\equiv$}\psfrag{y}{$Y(s)\equiv$}
		\psfrag{z}{$Z(s)\equiv$}
		\psfig{figure=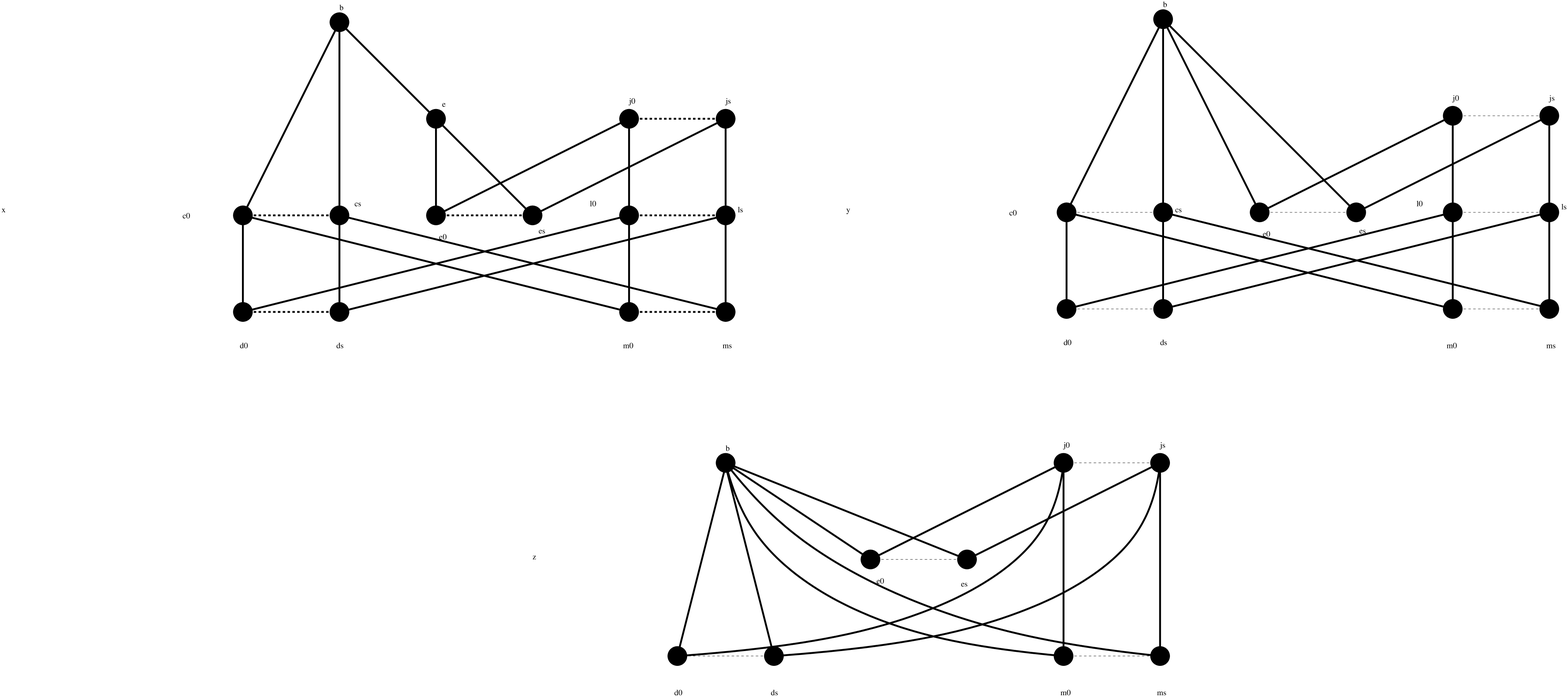,height=6cm}
		\caption{}
		\label{dibgcat0}
	\end{figure}
\end{center}

\end{enumerate}
\end{example}

Recall that, by definition, $Cat(X)$ is the minimum of the function $gcat$ on the homotopy type of $X$. Moreover, as observed above the removal of beat points never diminishes the geometric category, and so the maximum of $gcat$ on the homotopy type of $X$ is precisely $gcat(X_0)$ where $X_0 \subseteq X$ is the core of $X$. Moreover, as $X_0$ is unique up to homeomorphism within the homotopy class of $X$, we have a new numerical homotopy invariant for finite $T_0$-spaces. Namely,

\begin{defin}
Given a finite $T_0$-space $X$, we define the \emph{upper strong category} of $X$, $Cat_u(X)$, to be geometric category of its core $X_0 \subseteq X$.
\end{defin}

\begin{example}\label{CatyCatu}
Notice that the gap between $Cat(X)$ and $Cat_u(X)$ can also be arbitrarily large as the following example shows.

\begin{center}\label{dibgcat8b}
\psfrag{x}{$X\equiv$}\psfrag{p0}{$p_0$}\psfrag{p1}{$p_1$}\psfrag{p2}{$p_2$}\psfrag{pn}{$p_n$}\psfrag{q1}{$q_1$}\psfrag{q2}{$q_2$}\psfrag{q3}{$q_3$}\psfrag{q4}{$q_4$}\psfrag{q2n1}{$q_{2n-1}$}\psfrag{q2n}{$q_{2n}$}
\psfig{figure=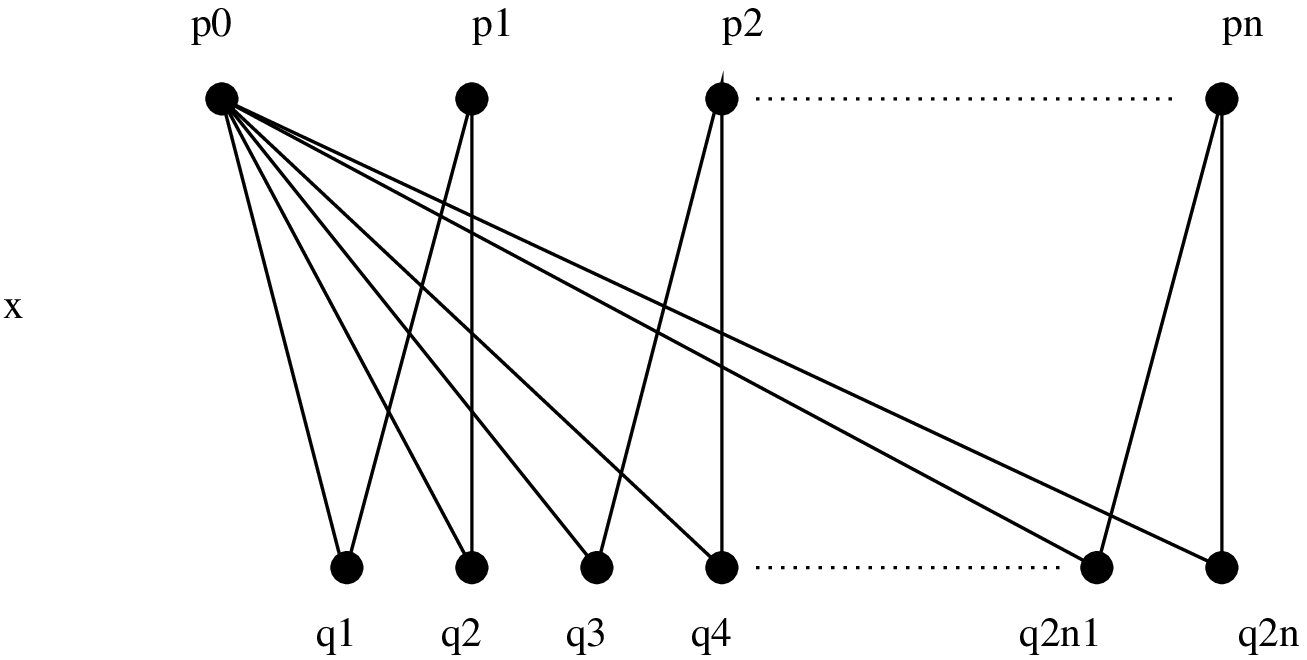,height=3cm}
\end{center}

This minimal space of height $1$ satisfies $Cat_u(X) = gcat(X) = n+1$. However one can show that $Cat(X) = cat(X) = 2$ by using the space $Y = X\cup \{p'_{k}\}_{k=1}^{n-1}$, with the ordering generated by the one of $X$ and $q_{2k}, q_{2k+1} \leq p'_{k} \leq p_0$ for $1\leq k \leq n-1$.
\end{example}

\begin{rem}\label{Ex1mas}
In a similar fashion to Remark \ref{Ex1}, by using \cite{vilches15}[Prop.6.2 and Prop6.5] and \cite{Barmak}[Th.5.2.1], and letting $Cat_s(X)=min\{Cat(sd^k(X)): k\geq 0\}$ and $gcat_s(X)=min\{gcat(sd^k(X)): k\geq 0\}$, we obtain the following diagram of inequalities with similar considerations.
\begin{equation}
\begin{tabular}{ccccccc}
{$cat(X)$}&{$\geq$}&$cat_s(X)$&$\geq$&$cat_w(X)$\\
\rotatebox[origin=c]{90}{$\leq$}&& \rotatebox[origin=c]{90}{$\leq$}&&\rotatebox[origin=c]{90}{$\leq$}\\
&&&&&&\\
$Cat(X)$&$\geq$&$Cat_s(X)$&$\geq$&$Cat(|\mathcal O(X)|)$\\
\rotatebox[origin=c]{90}{$\leq$}&& \rotatebox[origin=c]{90}{$\leq$}&&\rotatebox[origin=c]{90}{$=$}\\
&&&&&&\\
$gcat(X)$&$\geq$&$gcat_s(X)$&$\geq$&$Cat(|\mathcal O(X)|)$
\end{tabular}
\end{equation}
\begin{center}
	{\bf Open Question:} Find a finite $T_0$-space $X$ holding strict inequalities in some (or any) of the lines and/or columns of the diagram above.
\end{center}
\end{rem}

\begin{rem}
A strong LS type parameter specially devised for
finite spaces and its combinatorial counterpart in the class of cell complexes are defined in \cite{Tanaka}.
\end{rem}

\section{Numerical invariants and the maximal set of a finite space}\label{princat}

Another distinctive property of a finite space $X$ is the existence of its set of maximal points, $Max(X)$. Notice that for any beat point $x\in X$, one gets $Max(X-\{x\}) \leq Max(X)$ and so $Max(X_0) \leq Max(X)$ for the core of $X$, $X_0$. Moreover, as $X_0$ is determined up to homeomorphism by the homotopy type of $X$, the cardinal number $|Max(X_0)|$ is a numerical homotopy invariant of $X$. Furthermore, as minimal open sets in a finite $T_0$-space $X$ are contractible in themselves, we get
$$cat(X) \leq Cat(X) \leq Cat_u(X) = gcat(X_0) \leq |Max(X_0)| \leq |Max(X)|.$$
\par
The maximal set of a finite space yields a special type of open sets. Namely, we define a \emph{prime open set} in $X$ as an open set
$U_J = \cup _{x\in J}U_x$, where $J \subseteq Max(X)$. Then the following lemma holds.

\begin{lem}\label{lemaux}
Any open cover $\mathcal{U}$ of $X$ admits a refinement $\mathcal{V}$ consisting of prime open sets  with $|\mathcal{V}|\leq |\mathcal{U}|$.
\end{lem}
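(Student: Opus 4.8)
The plan is to build $\mathcal{V}$ by assigning to each maximal point of $X$ a single member of $\mathcal{U}$ containing it, and then bundling together the minimal open sets of all maximal points assigned to the same member. Concretely, write $\mathcal{U} = \{U_1,\dots,U_n\}$. For every $m\in Max(X)$ choose an index $\sigma(m)\in\{1,\dots,n\}$ with $m\in U_{\sigma(m)}$; this is possible since $\mathcal{U}$ covers $X$. Because $X$ is a finite (hence Alexandrov) space, $U_m$ is the minimal open neighbourhood of $m$, so the containment $m\in U_{\sigma(m)}$ forces $U_m\subseteq U_{\sigma(m)}$. For each $i$ in the image of $\sigma$ put $J_i=\sigma^{-1}(i)$ (a nonempty subset of $Max(X)$) and $V_i=U_{J_i}=\bigcup_{m\in J_i}U_m$, which is a prime open set; let $\mathcal{V}=\{V_i : i\in\sigma(Max(X))\}$.

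Then I would check the three required properties. First, $|\mathcal{V}|\leq n=|\mathcal{U}|$, since $\mathcal{V}$ is indexed by a subset of $\{1,\dots,n\}$. Second, $\mathcal{V}$ refines $\mathcal{U}$: for each $i$ in the image of $\sigma$ and each $m\in J_i$ we have $U_m\subseteq U_i$ by the observation above, hence $V_i\subseteq U_i$. Third, $\mathcal{V}$ covers $X$: every $x\in X$ lies below some maximal point $m$ of $X$ (true in any finite poset), so $x\in U_x\subseteq U_m\subseteq V_{\sigma(m)}$.

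The argument is essentially bookkeeping, and I do not anticipate a genuine obstacle. The only topological input is that in a finite $T_0$-space every open neighbourhood of $x$ contains $U_x$, and the only combinatorial input is that each element of a finite poset sits below a maximal one. The single point worth stating carefully is that a prime open set is permitted to be generated by a \emph{single} maximal point (the case $|J_i|=1$), so that even members of $\mathcal{U}$ that are far from prime get \emph{replaced} by a prime open set rather than simply discarded; this is what keeps $\mathcal{V}$ a cover while not increasing the cardinality.
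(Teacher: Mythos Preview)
Your proof is correct and follows essentially the same idea as the paper: replace each member $U$ of the cover by the prime open set generated by the maximal points lying in $U$. The only cosmetic difference is that the paper takes \emph{all} maximals of $U$ to form $V_U=\bigcup\{U_x : x\in Max(X)\cap U\}$, whereas you partition $Max(X)$ via a choice function $\sigma$; both yield a refining prime cover of cardinality at most $|\mathcal{U}|$.
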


\begin{proof}
For each $U$ let $V_U$ be the (possibly empty) open set
$V_U= \cup \{U_x; x\in Max(X)\cap U\}$. As any $x\in Max(X)$ belongs to some $U\in \mathcal{U}$, the family  $\mathcal{V}$ of non-empty sets in $\{V_U; U \in \mathcal{U}\}$ is the required refinement.
\end{proof}

This immediately implies:

\begin{lem}\label{lemaux1}
For any finite $T_0$-space $X$, $cat(X)$  is the least cardinal number of those coverings of $X$ consisting of prime open sets contractible in $X$.
\end{lem}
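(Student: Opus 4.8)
The plan is to establish equality between $cat(X)$ and the number $c(X)$ defined as the least cardinality of a covering of $X$ by prime open sets that are contractible in $X$, by proving the two inequalities separately.

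The inequality $cat(X)\le c(X)$ will be immediate from the definitions: a covering of $X$ by prime open sets contractible in $X$ is in particular a covering of $X$ by open sets each of which includes into $X$ by a homotopically trivial map, and $cat(X)$ is the minimum cardinality over all such coverings. (One may also note in passing that $cat(X)$ is finite here, since $\{U_m : m\in Max(X)\}$ is a finite covering by open sets that are even contractible in themselves, each $U_m$ having $m$ as a maximum; but this is not needed for the argument.)

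For the reverse inequality $c(X)\le cat(X)$, I would start from an optimal covering $\mathcal{U}=\{U_1,\dots,U_n\}$ with $n=cat(X)$, so that each inclusion $U_i\hookrightarrow X$ is homotopically trivial, and then apply Lemma~\ref{lemaux} to obtain a refinement $\mathcal{V}$ consisting of prime open sets with $|\mathcal{V}|\le n$. The key observation is that each $V\in\mathcal{V}$ is contained in some $U_i$, so the inclusion $V\hookrightarrow X$ factors as $V\hookrightarrow U_i\hookrightarrow X$; since the second map is homotopic to a constant, the composite $V\hookrightarrow X$ is homotopic to that same constant, i.e.\ $V$ is contractible in $X$. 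Hence $\mathcal{V}$ is a covering of $X$ by prime open sets contractible in $X$ with $|\mathcal{V}|\le cat(X)$, which gives $c(X)\le cat(X)$.

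Combining the two inequalities yields $cat(X)=c(X)$. The only genuine ingredient is Lemma~\ref{lemaux}, which has already been proved; the remaining steps rely solely on the elementary fact that precomposing a nullhomotopic map with an inclusion yields a nullhomotopic map, so I do not anticipate any real obstacle.
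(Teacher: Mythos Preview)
Your proof is correct and follows exactly the approach the paper has in mind: the paper simply states that Lemma~\ref{lemaux1} ``immediately implies'' from Lemma~\ref{lemaux}, and your argument spells out precisely this implication, using the refinement from Lemma~\ref{lemaux} together with the trivial observation that an open subset of a set contractible in $X$ is itself contractible in $X$.
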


Lemma \ref{lemaux1} does not hold for the geometric category since subsets of contractible sets need not be contractible. For instance, for the space $X$ in Example \ref{Expalo}(1), the minimal open set $U_a$ contains the non-contractible open set $U_a-\{a\}$.

As observed in Example \ref{Expalo}(2) above, the geometric category of a space can be altered by introducing up beat points and considering non-prime open sets. It is then natural to state a specialized version of the geometric category by restricting to those coverings consisting of prime open sets. More precisely,

\begin{defin}
The \emph{prime geometric category} of a finite $T_0$-space $X$ is the
least cardinal number, $gcat_p(X)$, of coverings of $X$ by prime open sets which are contractible in themselves.
\end{defin}

\begin{example}\label{Catuygpcat}
For each $t\geq 1$  we now construct a space $X(t)$ with $gcat(X(t)) \neq gcat_p(X(t))$. In fact it shows that the gap between $gcat$ and $gcat_p$ can be arbitrarily large.

\begin{center}\label{dibgcat10b}
\psfrag{d1}{$d_1$}\psfrag{dt}{$d_t$}\psfrag{e1}{$e_1$}\psfrag{et}{$e_t$}\psfrag{f1}{$f_1$}\psfrag{ft}{$f_t$}\psfrag{k1}{$k_1$}\psfrag{kt}{$k_t$}\psfrag{a}{$a$}\psfrag{b}{$b$}\psfrag{c}{$c$}\psfrag{g}{$g$}\psfrag{h}{$h$}\psfrag{x}{$X(t)\equiv$}
\psfig{figure=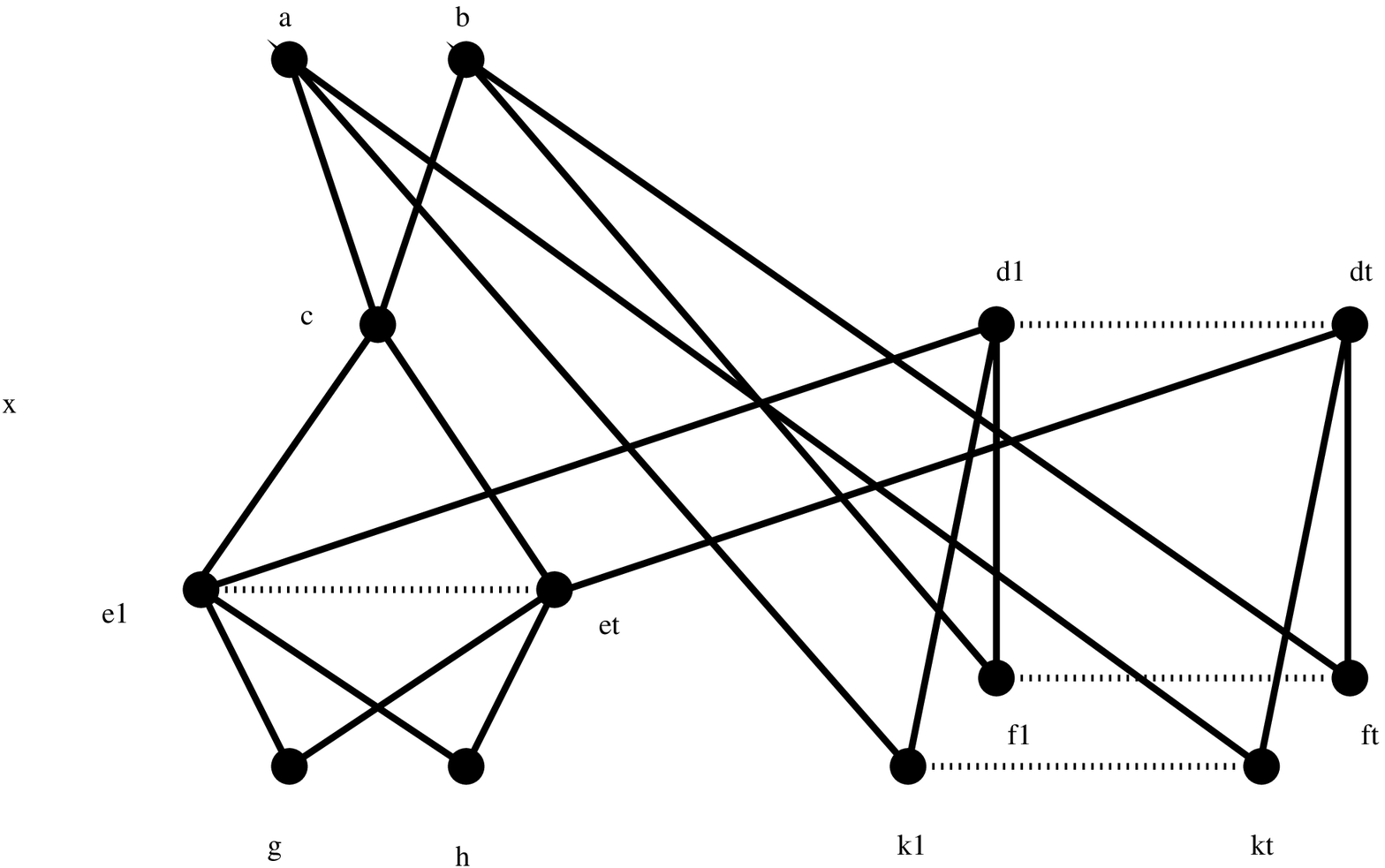,height=4cm}
\end{center}
It is clear that $X(t)$ is a minimal space; moreover, the open set $U_a\cup U_c\cup U_{d_1}\cup \dots \cup U_{d_t}$ retracts onto $U_c$ and so $gcat(X(t)) = Cat_u(X(t)) = 2$. However, $gcat_p(X(t)) = t+1$ since the union $U_a\cup U_b$ is the only prime open set which is contractible in itself and is not minimal.

\end{example}

In contrast to ordinary geometric category the prime version is a homotopy invariant. More precisely,

\begin{prop}\label{Cat= gcat}
Let $X$ and $Y$ be finite $T_0$-spaces. Then the equality $gcat_p(X) = gcat_p(Y)$ holds whenever $X$ and $Y$ are homotopy equivalent.
\end{prop}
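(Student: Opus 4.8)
The plan is to reduce the statement, using the uniqueness of the core, to the assertion that deleting one beat point does not change $gcat_p$, and then to prove this by matching coverings of $X$ and of $Y:=X-\{x\}$ by contractible prime open sets. First I would note that if $X$ and $Y$ are homotopy equivalent finite $T_0$-spaces then their cores $X_0$ and $Y_0$ are homeomorphic (see \cite{Ba11}), and since a homeomorphism carries minimal open sets to minimal open sets it carries prime open sets to prime open sets and preserves contractibility, so $gcat_p(X_0)=gcat_p(Y_0)$; hence it suffices to show $gcat_p(X)=gcat_p(X_0)$, and, since $X_0$ is obtained from $X$ by removing beat points one at a time and the core of $X-\{x\}$ is again $X_0$, by induction on $|X|$ it is enough to prove $gcat_p(X)=gcat_p(X-\{x\})$ for a single beat point $x$. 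I would also record the elementary bookkeeping fact that for $J_1,\dots,J_n\subseteq Max(X)$ the prime sets $U_{J_i}=\bigcup_{j\in J_i}U_j$ cover $X$ if and only if $J_1\cup\dots\cup J_n=Max(X)$ (every point lies below a maximal one), so that $gcat_p(X)$ is the least number of subsets $J\subseteq Max(X)$ with $U_J$ contractible in itself that are needed to cover $Max(X)$.

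Next I would fix a beat point $x$, set $Y=X-\{x\}$, and treat first the case in which $x$ is \emph{not} maximal in $X$; this covers every up beat point and every non-maximal down beat point. In this case one checks directly that $Max(Y)=Max(X)$, and that for each $J\subseteq Max(X)$ with $x\in U_J$ the point $x$ is again a beat point of $U_J$ of the same kind as in $X$: if $m=\min\{y>x\}$ then $x<j$ for some $j\in J$ forces $m\le j$, hence $m\in U_J$; symmetrically, if $c=\max\{y<x\}$ then $c\in U_J$, and in either situation $\{y>x\}\cap U_J$, resp. $\{y<x\}\cap U_J$, has the same minimum, resp. maximum, as in $X$. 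Consequently $U_J\cap Y=U_J-\{x\}$ is a strong deformation retract of $U_J$, so $U_J$ is contractible in itself if and only if $U_J\cap Y$ is; together with the bookkeeping fact, the assignment $J\mapsto J$ is then a cardinality-preserving bijection between contractible prime coverings of $X$ and of $Y$, whence $gcat_p(X)=gcat_p(Y)$.

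It then remains to handle a maximal down beat point $x$, with $c:=\max\{y<x\}$, so that $U_x=\{x\}\sqcup U_c$. If $c$ becomes maximal in $Y$, then $\{y>c\}=\{x\}$ in $X$, i.e. $c$ is a (non-maximal) up beat point of $X$; deleting $c$ instead of $x$ falls under the previous paragraph, and since $X-\{c\}$ again has core $X_0$ and fewer points, induction closes this subcase. The genuinely delicate subcase is therefore that $x$ is a maximal down beat point for which $c$ stays non-maximal in $Y$, so that there is $w\in Max(Y)$ with $c<w$ and $Max(Y)=Max(X)\setminus\{x\}$. Here the inequality $gcat_p(X)\le gcat_p(Y)$ is easy: starting from a contractible prime covering of $Y$, choose a member $U^Y_K$ with $c\in U^Y_K$; then $U^X_{K\cup\{x\}}=U^X_K\cup U_x=U^X_K\cup\{x\}$ because $U_c\subseteq U^X_K$, the point $x$ is a down beat point of $U^X_{K\cup\{x\}}$ with witness $c$, so $U^X_{K\cup\{x\}}$ is contractible, and replacing $K$ by $K\cup\{x\}$ produces a contractible prime covering of $X$ of the same size.

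The hard part will be the reverse inequality $gcat_p(Y)\le gcat_p(X)$ in this last subcase. Given a contractible prime covering $\{U^X_{J_i}\}_{i=1}^{n}$ of $X$, a member with $x\in J_i$ restricts to $U^X_{J_i}-\{x\}=U^Y_{J_i\setminus\{x\}}\cup U^Y_c$, which is contractible — since $x$ is a down beat point of $U^X_{J_i}$ — but need not be a prime open set of $Y$ (its maximal points in $Y$ are $J_i\setminus\{x\}$, and $U^Y_c$ need not be absorbed). The plan is to replace it by an honest prime set such as $U^Y_{(J_i\setminus\{x\})\cup\{w\}}$, which contains $U^X_{J_i}-\{x\}$ because $c<w$, so coverage is preserved; the crux is to show this enlargement is still contractible. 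The route I would take is to prove the auxiliary statement that adjoining to a contractible open subset $V$ of a finite space the minimal open set $U_w$ of a point $w$, under the hypothesis that $c<w$ and $U_c\subseteq V\cap U_w$ (so $U_w$ is glued to $V$ over a down-set containing the contractible set $U_c$), keeps the union contractible; alternatively, a purely order-theoretic argument that such a maximal down beat point can always be removed after performing first the beat-point reductions already handled above would also suffice. Either of these is the step that requires real work, the remainder being the bookkeeping described above.
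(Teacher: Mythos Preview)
Your reduction to removing a single beat point and your treatment of up beat points and non-maximal down beat points are correct and more explicit than the paper, which simply invokes Proposition~\ref{removingdown} for all down beat points and then gives the detailed argument only for up beat points (where $b\notin Max(X)$ makes the primality of the restricted sets automatic).

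The genuine gap is in your last subcase: the auxiliary lemma you propose is false. Take $X=\{r,p,q,c,b,w,m\}$ with covering relations $r<c$, $r<m$, $p<m$, $p<w$, $q<m$, $c<b$, $c<w$. Then $b$ is a maximal down beat point with witness $c$, and $c<w\in Max(Y)$ for $Y=X\setminus\{b\}$, so we are in your hard subcase. The prime set $U^X_{\{m,b\}}=\{m,b,r,p,q,c\}$ is contractible (remove the down beat point $b$, then the down beat point $c$, then $m$ is a maximum), so $V:=U^X_{\{m,b\}}-\{b\}$ is contractible, open, and satisfies $U_c=\{c,r\}\subseteq V\cap U_w$. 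But your replacement $V\cup U_w=U^Y_{\{m,w\}}=\{m,w,r,p,q,c\}$ is \emph{not} contractible: removing the up beat points $q$ and $c$ leaves $\{m,w,r,p\}$ with $m,w>r,p$, the four-point minimal model of $S^1$, which has no beat points. Hence the step ``$U^Y_{(J_i\setminus\{x\})\cup\{w\}}$ is contractible'' fails, and your construction does not yield $gcat_p(Y)\le gcat_p(X)$. Your alternative ``purely order-theoretic argument'' is not spelled out, and one cannot simply postpone this subcase: there are spaces whose only beat points are maximal down beat points with non-maximal witness, so some argument specific to this configuration is unavoidable. The paper's route, by contrast, absorbs the down-beat case into Proposition~\ref{removingdown} and never needs to enlarge a set by $U_w$; if you want to make your approach work you must find a different replacement for $U^X_{J_i}$ inside $Y$, not the one obtained by adjoining $U_w$.
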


\begin{proof}
We can assume that $Y$ is the core of $X$ and so homeomorphic to the core of any finite space with the homotopy type of $X$. By an inductive argument, we can assume that $X-Y$ reduces to a beat  point $b$. Moreover, by Proposition \ref{removingdown} we can assume that $b$ is an up beat point (whence, $b\notin Max(X)$). Let $d$ be the minimum of the points greater than $b$ and $\mathcal{V} = \{V_1, \dots, V_n\}$ be a covering of $Y$ consisting of contractible prime open sets. Take $V_i$ with $d\in V_i$; then the minimal open set $U_d$ of $d$ in $Y$ is contained in $V_i$. It is immediate that $U'_d = U_d\cup\{b\} $ is the minimal open set of $d$ in $X$ and $b$ is an up beat point in $V'_i = V_i \cup \{b\}$. Thus,  $\mathcal{V}' = (\mathcal{V} -\{V_i\}) \cup \{V'_i\}$ is a covering of $X$ by contractible prime open sets, and therefore $gcat_p(X) \leq gcat_p(Y)$.
\par
Conversely, given a covering of $X$ consisting of contractible prime open sets $\mathcal{W} = \{W_1, \dots, W_m\}$, let $d$ denote again the minimum of the points above $b$ in $X$. Then $b$ is an up beat point in any $W_i$ with $d\in W_i$, and $W'_i = W_i-\{b\}$ is a contractible prime open set in $Y$ since $b$ is not in $Max(X)$. Thus, $\mathcal{W}' = \{W'_j; d\in W_j\} \cup \{W_j; d\notin W_j\}$ is a covering of $Y$ consisting of contractible prime open sets of $Y$. We have proved $gcat_p(Y) \leq gcat_p(X)$, and hence $gcat_p(X) = gcat_p(Y)$.
\end{proof}

\begin{rem}{\rm
Given a finite $T_0$-space $X$, if infinite Alexandrov spaces are allowed to represent the homotopy type of $X$  we do not know whether Proposition \ref{Cat= gcat} holds.
}
\end{rem}

As a consequence of Proposition \ref{Cat= gcat}, the prime geometric category of a finite $T_0$-space $X$ is a new numerical homotopy invariant which bounds $Cat_u(X)$ from above, since the equality $gcat_p(X) = gcat_p(X_0)$ holds for the core $X_0$ of $X$. Then we have:

$$Cat_u(X) = gcat(X_0) \leq gcat_p(X_0) = gcat_p(X).$$

Notice that the prime LS-category of $X$ coincides with the usual LS-category by Lemma \ref{lemaux1} and the following inequalities relate the numerical invariants defined above.
\begin{equation}\label{ineq}
cat(X) \leq Cat(X) \leq Cat_u(X) \leq gcat_p(X) \leq |Max(X_0)| \leq |Max(X)|.
\end{equation}

\begin{example}\label{ex1}
{\rm
For each $n\geq 2$, let $X_n$ be the finite space of height 1 in Example \ref{Ex1}. It is readily checked that $cat(X_n) = gcat_p(X_n) = |Max(X_n)| = n$ . Incidentally, $X_n$ is the smallest finite space with LS-category $n$. More generally, the smallest finite space of height $k$ with LS-category $n$ is the finite space whose Hasse diagram has two elements at each level $\leq k-1$ and $n$ maximal elements; see Figure \ref{figLUZ1-2}
\begin{figure}[ht]
	\centering
	\psfrag{a0}{$a_0$}\psfrag{b0}{$b_0$}\psfrag{x1}{$x_1$}\psfrag{x2}{$x_2$}\psfrag{x3}{$x_3$}\psfrag{xn2}{$x_{n-2}$}\psfrag{xn1}{$x_{n-1}$}\psfrag{xn}{$x_{n}$}\psfrag{l}{$\cdots$}\psfrag{d}{$\vdots$}\psfrag{a1}{$a_1$}\psfrag{b1}{$b_1$}\psfrag{ah1}{$a_{h-1}$}\psfrag{bh1}{$b_{h-1}$}\psfrag{ah2}{$a_{h-2}$}\psfrag{bh2}{$b_{h-2}$}\psfrag{ah3}{$a_{h-3}$}\psfrag{bh3}{$b_{h-3}$}		\psfig{figure=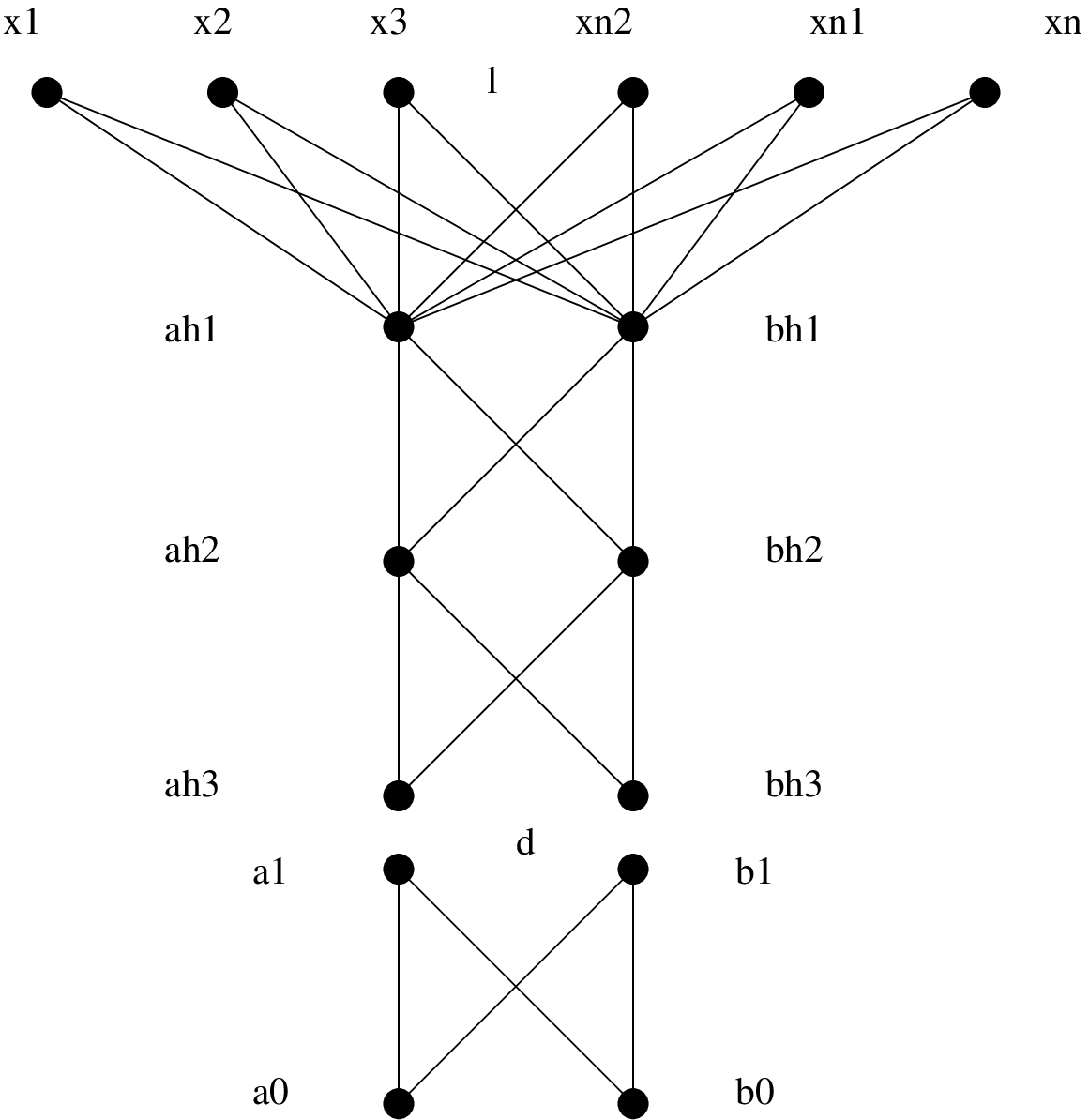,height=4cm}
	\caption{}
	\label{figLUZ1-2}
\end{figure}
}
\end{example}

The problem of measuring the gaps in the sequence of inequalities in (\ref{ineq}) arises naturally. We have already observed in Examples \ref{CatyCatu} and \ref{Catuygpcat} that the intervals $[Cat(X), Cat_u(X)]$ and $[Cat_u(X), gcat_p(X)]$ can be arbitrarily large. It is obvious that for a height $1$ space $X$ whose Hasse diagram is a cycle and $|Max(X)| = k+2$ the interval $[gcat_p(X), |Max(X)|]$ has length $k$.
\par
{\bf Open Question:} Estimate the interval $[cat(X), Cat(X)]$ for finite $T_0$-spaces.

Recall that a well-known result due to Fox \cite{CLOT03} states that $Cat(X) \leq cat(X)+1$ for a CW-complex $X$. In the next section we will show that $cat$ and $Cat$ coincide in the class of height 1 spaces.

\begin{table}
\begin{center}
\begin{tabular}{| c | c |}
\hline
Notation & Definition \\ \hline
 $cat$  & Minimal number of contractible open sets in $X$ that cover $X$ \\  \hline

 $gcat$  & Minimal number of contractible open sets that cover $X$ \\  \hline
  $Cat$  & Minimal value of $gcat$ in the homotopy type of $X$ \\  \hline

$cat_w$  & Value of $cat$ of the underlying polyhedron to the order complex of $X$ \\ \hline

$cat_s$  & Minimal value of $cat$ on the successive barycentric subdivisions of $X$ \\ \hline

$Cat_u$  & Value of $gcat$ on the core of $X$ \\ \hline

 $gcat_p$  & Minimal number of contractible prime open sets that cover $X$ \\  \hline

\end{tabular}
\caption{Different versions of the category}
\label{tab:coches}
\end{center}
\end{table}

\section{Spaces of height 1}\label{height2}

Spaces of height 0 are exactly finite discrete spaces and so their LS-category is trivially the number of points of the spaces. Also any positive integer can be realized as any of the LS-type numbers defined above of a finite space of height 1; see Example \ref{ex1}.

We will next show that the strong category equals the LS-category in the class of height 1 spaces. Namely,

\begin{theor}\label{cat=Cat1}
Let $X$ be a finite $T_0$-space of heigth $1$. Then $Cat(X) = cat(X)$. Moreover, the equaility is
achived by a height $2$ space homotopy equivalent to $X$.
\end{theor}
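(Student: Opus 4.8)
The inequality $cat(X)\le Cat(X)$ is general and is already part of the chain (\ref{ineq}): $cat$ is a homotopy invariant and $cat\le gcat$ always, so $cat(X)=cat(Y)\le gcat(Y)$ for every $Y\simeq X$, and taking the minimum over such $Y$ gives $cat(X)\le Cat(X)$. Hence the whole content of the theorem is to exhibit \emph{one} finite $T_0$-space $Y$ of height $2$ with $Y\simeq X$ and $gcat(Y)\le cat(X)$: this immediately forces $Cat(X)\le gcat(Y)\le cat(X)\le Cat(X)$, so that $Cat(X)$, $cat(X)$ and $gcat(Y)$ all coincide, which is both assertions at once. Since $cat$, $gcat$ and $Cat$ are additive over connected components (a set contractible in a space must lie in a single component), I would first reduce to $X$ connected; and one may assume $X$ has height exactly $1$, because if $X$ is discrete the space obtained by hanging a single up beat point over one maximal vertex has height $2$ and the same $cat$ and $gcat$ (adjoining or deleting an up beat point never changes $gcat$ of such a ``pendant'').

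Fix, by Lemma \ref{lemaux1}, a covering $\{U_{J_1},\dots,U_{J_n}\}$ of $X$ by prime open sets each contractible in $X$, with $n=cat(X)$. The first observation is that each $U_{J_i}$ is a \emph{forest} in the order-complex picture: $\mathcal O(X)$ is a graph and $\mathcal O(U_{J_i})$ an induced subgraph, subgraph inclusions are $\pi_1$-injective on each component, and being nullhomotopic the inclusion is $\pi_1$-trivial, so every component of $U_{J_i}$ is a tree (in height $1$ this is just the statement that deleting beat points deletes leaves). Write $U_{J_i}=T_{i,1}\sqcup\cdots\sqcup T_{i,r_i}$. For each $i$ with $r_i\ge 2$ I would glue the $T_{i,\ell}$ together by adjoining new points: fix a spanning tree $S$ of $\mathcal O(X)$ restricting to a spanning tree of each $T_{i,\ell}$ (possible since the $T_{i,\ell}$ are disjoint connected subgraphs); contracting the $T_{i,\ell}$ in $S$ yields a tree whose $r_i-1$ edges correspond to paths $P$ of $S$ joining two of the components with interior disjoint from $\bigcup_\ell T_{i,\ell}$, and by choosing these as shortest such paths, handled in a suitable order, one moreover makes them pairwise internally disjoint and meeting no $T_{i,\ell}$ except at the endpoints. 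For such a path $P=(a_0\!-\!v_1\!-\!a_1\!-\cdots-\!v_k\!-\!a_k)$ with the $a_j$ minimal and the $v_j$ maximal (so $v_j>a_{j-1},a_j$ in $X$), adjoin new points $z_1,\dots,z_k$, the only new relations being $a_{j-1}<z_j<v_j$. No new relation among old points is created ($a_{j-1},a_j<v_j$ already), and at the moment $z_j$ is adjoined the set of points above it is exactly $\{v_j\}$, so $z_j$ is an up beat point; hence each adjunction reverses a strong deformation retraction, and the resulting space $Y$ satisfies $Y\searrow X$, in particular $Y\simeq X$. Every new point lies above minimal points of $X$ and below a single maximal point $v_j$ of $X$, which thereby rises to level $2$, so $Y$ has height $2$, and $Max(Y)=Max(X)$.

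For the covering of $Y$, for each $i$ put $W_i=U_{J_i}\cup\bigcup_j U^Y_{z_j}$, the union of $U_{J_i}$ with the minimal open sets in $Y$ of the $z$-points introduced for $i$; this is open in $Y$, contains $U_{J_i}$, and contains every $z$-point added for $i$. The skeleton $U_{J_i}$ with the new paths $a_0\!-\!z_1\!-\cdots-\!z_k\!-\!a_k$ attached — each attached at precisely its two endpoints, in two distinct components, by the choice of the $P$'s — is a height-$1$ space with tree Hasse graph (the tops $v_j$ do not lie in $W_i$), hence contractible; and any foreign $z$-point that falls into $W_i$ does so only together with its top $v\in J_i$, making it an up beat point of $W_i$, so $W_i$ is still contractible. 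Since $\bigcup_iU_{J_i}=X$ and each $z$-point lies in its $W_i$, the family $\{W_i\}_{i=1}^n$ is an open covering of $Y$ by contractible sets, whence $gcat(Y)\le n=cat(X)$, finishing the proof.

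The delicate part is the construction of $Y$. The bridging points must be \emph{genuine} up beat points — which pins down where they may sit: above the two minimal points to be joined, below a single common maximal neighbour of those points in $X$ — and simultaneously the enlarged sets $W_i$ must remain forests. Arranging both at once, for all disconnected $U_{J_i}$ together, i.e. routing the bridging paths along one spanning tree so that no path touches a component of $U_{J_i}$ or an earlier bridging path more than once, is the combinatorial heart of the argument; without that control the coned-off sets can acquire extra cycles (as one sees already for the minimal model of a long cycle), so one really has to use that $\{U_{J_i}\}$ is an optimal covering by sets contractible in $X$, not an arbitrary family.
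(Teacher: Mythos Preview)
Your strategy is the same as the paper's: start from a prime categorical cover $\{U_{J_i}\}$, use Lemma~\ref{lem1} to see that each $U_{J_i}$ is a disjoint union of trees, and then adjoin new up beat points below maximal elements along connecting paths so that the enlarged sets become contractible. However, the execution has two genuine gaps.

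First, the bridging relation you impose is too weak. You declare ``the only new relations being $a_{j-1}<z_j<v_j$'', so $z_j$ sits above $a_{j-1}$ only. Then $U^Y_{z_j}=\{z_j,a_{j-1}\}$ and the string $a_0-z_1-\cdots-z_k-a_k$ is \emph{not} a path in $W_i$: there is nothing linking $z_j$ to $a_j$ once $v_j$ is removed. The paper's Lemma~\ref{lem2} adds $q_k$ with $x_{2k},x_{2k+2}<q_k<x_{2k+1}$, i.e.\ above \emph{both} neighbouring minimal points; you need $a_{j-1},a_j<z_j<v_j$ for the same reason. This still keeps $z_j$ an up beat point.

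Second, and more seriously, the claim that contracting the $T_{i,\ell}$ in a spanning tree $S$ leaves ``a tree whose $r_i-1$ edges correspond to paths'' is false: the contracted tree still contains every vertex of $X\setminus U_{J_i}$, so it generally has far more than $r_i-1$ edges, and the induced paths between the $T_{i,\ell}$ need not be internally disjoint. Concretely, take $Min(X)=\{o,p_1,p_2,p_3\}$, $Max(X)=\{A_1,A_2,A_3,B_1,B_2,B_3\}$ with $p_i<A_i$ and $p_i,o<B_i$. For $J=\{A_1,A_2,A_3\}$ the three components $\{A_i,p_i\}$ can only be joined by paths through $o$, and any two such paths share $o$ as an internal vertex; adding $z$-points along two of them creates the $4$-cycle $o-z-p_2-z'-o$ in $W$, so $W$ is not contractible. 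Thus ``pairwise internally disjoint'' cannot be arranged in general if you insist that each path run between two of the \emph{original} components.

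The paper sidesteps this by working inductively (Lemma~\ref{lem2}): it merges only \emph{two} components per step, and at the next step the connecting arc $\widehat{\Gamma}$ is allowed to start or end at a point (such as $o$ above) that became part of the already-merged block in a previous step. That is exactly what makes the resulting skeleton a tree. Your one-shot construction can be repaired along the same lines, but as written the disjointness of the bridges is asserted, not proved, and fails in easy examples.
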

\begin{remark}
	Actually, the result is valid for any finite $T_0$-space whose core has height $1$.
\end{remark}
The proof of Theorem \ref{cat=Cat1} is a consequence of the two following lemmas.
\begin{lem}\label{lem1}
Let $X$ be a connected finite space of height 1. Given $J\subseteq Max(X)$, the prime open set $U_J$ is contractible in $X$ if and only if each component of $U_J$ is contractible in itself.
\end{lem}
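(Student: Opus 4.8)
The plan is to analyze what happens to a homotopy contracting $U_J$ inside $X$, using the combinatorial description of homotopies between maps into finite $T_0$-spaces recalled in Section~\ref{notation}. One direction is nearly immediate: if each connected component $C$ of $U_J$ is contractible in itself, then since $U_J$ is the disjoint union of its components and each component-inclusion $C\subseteq U_J\subseteq X$ is null-homotopic, one must still check that the constant maps on the various components can be joined inside $X$; this is where connectedness of $X$ enters, together with the fact that $X$ (being connected of height $1$) is path-connected, so any two constant maps $X\supseteq C\to x_0$ and $X\supseteq C'\to x_1$ become homotopic in $X$ via a fence of comparabilities in $X$. So the first step I would carry out is this ``easy'' direction, being careful to assemble the componentwise contractions into a single homotopy of the inclusion $U_J\hookrightarrow X$.

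For the converse — the substantive direction — suppose $U_J$ is contractible in $X$, i.e.\ the inclusion $j\colon U_J\hookrightarrow X$ is homotopic to a constant map. Using the fence characterization, there is a sequence $j=f_0,f_1,\dots,f_m$ of maps $U_J\to X$ with consecutive terms comparable, ending at a constant, and (by the refinement from \cite[Lemma~2.1.1]{Ba11}) we may take each step to change the value at a single point. The key structural observation to exploit is that $X$ has height $1$: the points of $U_J$ that are maximal in $X$ (the ``top'' points indexed by $J$) are maximal in $X$ as well, so for such a point $x$ one has $U_x=\{x\}\cup(\text{minimal points below }x)$, and the only maps into $X$ below the inclusion on $x$ must send $x$ to a point $\le x$, i.e.\ to $x$ or to one of the minimal points under it. I would track, along the fence, the image of the maximal points of $U_J$; a maximal point $x$ of $U_J$ can only be ``moved'' downward in $X$ to a minimal point $p$ with $p<x$, and two maximal points $x,x'$ of $U_J$ can be collapsed to a common value only if they share a common lower bound in $X$ that the homotopy can route them through — but routing them there forces the intermediate points of $U_J$ in between to be identified too, which in a height-$1$ space means $x$ and $x'$ lie in the same connected component of $U_J$. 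Formalizing this ``trace'' argument — showing that whenever the homotopy merges the images of two maximal points, those points were already in the same component of $U_J$ — is what yields that the contraction of $U_J$ in $X$ never crosses between components, hence restricts to a contraction of each component in itself (a constant map $C\to\{*\}$ through maps $C\to X$ whose images stay inside $C$, which since $C$ is a down-set works out to a contraction of $C$ in itself).

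I expect the main obstacle to be exactly this bookkeeping in the converse: one must argue that the single-point moves of the fence cannot transport a maximal point of one component of $U_J$ onto a point lying in (the closure of) another component without violating continuity or the height-$1$ constraint, and then deduce that the restricted homotopy on a component $C$ has all its maps landing in $C$. The cleanest way I foresee is to use the fact that in height $1$ a connected open set is ``one maximal point plus its down-set,'' or more precisely to induct on the number of maximal points of $U_J$ and peel off one component at a time, invoking Proposition~\ref{removingdown} and the structure of beat points to reduce to the minimal (core) case where the components are genuinely separated; the supporting lemmas of Section~\ref{princat}, especially the prime-open-set refinement (Lemma~\ref{lemaux}) and the behaviour of cores, should make the reduction go through. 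A secondary, more routine point to nail down is that ``contractible in itself'' for an open subset $C\subseteq X$ of height $1$ is equivalent to $C$ having a maximum or being reducible to a point via beat-point removals, so that the final assembled statement reads correctly.
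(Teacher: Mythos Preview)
Your easy direction is fine, and indeed more carefully argued than the paper, which dispatches it in one word.

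For the converse, the bookkeeping you propose has a genuine gap. You want to trace the fence and conclude that it ``never crosses between components'', so that its restriction to a component $C$ lands in $C$ at every stage and hence gives a contraction of $C$ in itself. But this is false: a null-homotopy of $U_J\hookrightarrow X$ can carry points of $C$ outside $C$, even outside $U_J$. Take $Max(X)=\{a,b,c\}$, $Min(X)=\{p,q\}$ with $p<a,b$ and $q<b,c$, and $J=\{a,c\}$; after first lowering $a\mapsto p$, the fence step raising the image of $a$ to $b$ is continuous (one only needs $f(p)\le f(a)$, and $p<b$), yet $b\notin U_J$. Moreover the images of the maximals $a$ and $c$ \emph{do} eventually coincide although $a,c$ lie in distinct components of $U_J$, so the claim ``merged maximals share a component'' is also false. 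Your alternative plan of peeling off components by induction does not help either: passing from $U_J$ to a single component is already free (just restrict the fence), so nothing is gained.

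The point you are missing is precisely that this reduction is free --- restricting the fence shows at once that each component $C$ is contractible \emph{in $X$} --- and that the actual content of the lemma is the implication
\[
C\ \text{connected prime open, contractible in }X\ \Longrightarrow\ C\ \text{contractible in itself.}
\]
The paper proves this by passing to the core $C_0\subseteq C$ (still contractible in $X$) and examining only the \emph{first} move of a fence $f_0=\iota_{C_0},f_1,\dots$, which changes a single point $x_0$. If $|C_0|>1$ then $x_0$ is not a beat point of $C_0$; using height~$1$ and continuity of $f_1$, one obtains $m\le m'$ for two distinct elements $m,m'$ of $Max(X)$ (when $x_0$ is minimal) or of $Min(X)$ (when $x_0$ is maximal), a contradiction. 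Hence $C_0$ is a point and $C$ is contractible. No tracking of later fence moves, and no induction on components, is needed.
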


\begin{proof}
Obviously $U_J$ is contractible in $X$ if all its components are contractible in themselves. Conversely, let $C\subseteq U_J$ be a component. Then $C = U_{J'}$
is also a prime open set for some $J'\subseteq J$ and contractible in $X$.
\par
Let $C_0 \subseteq C$ be the core of $C$. Next, we show that $C_0$ reduces to a point and so $C$ is contractible.
\par
As $C_0$ remains contractible in $X$, there exists a sequence $f_0, f_1, \cdots, f_n: C_0 \to X$, of comparable maps with
$f_0$, the inclusion $C_0 \subseteq X$, and a constant map $f_n$. Moreover, we can assume  that there exist points $x_0, \dots, x_n\in C_0$ such that $f_{i-1} =f_i$ on $C_0-\{x_{i-1}\}$ and $ f_{i-1}(x_{i-1}) < f_i(x_{i-1})$ or $f_{i-1}(x_{i-1}) > f_i(x_{i-1})$ ($1 \leq i \leq n -1$).
\par
If $C_0$ were not just a point, we arrive to a contradiction.
\par
First assume that $x_0$ is at level $0$. Then $x_0 = f_0(x_0) < f_1(x_0)= m \in Max(X)$.  Moreover, by connectedness $\{y \in C_0; y > x_0\} \neq \emptyset$, and there exists $m' \in Max(C_0) \subseteq Max(X)$ such that $x_0 \leq m'$ and $m \neq m'$, since otherwise $x_0$ would be an up beat point in $C_0$. Then by continuity we reach the contradiction $m = f_1(x_0) \leq f_1(m') = f_0(m') = m'$.
\par
Assume now that $x_0$ lies at level $1$. Then $f_1(x_0) \in Min(X)$ and $y =f_1(x_0) < f_0(x_0) = x_0$. Again by connectedness
$Min(C_0) \cap U_{x_0} \neq \emptyset$, and there exists $y' \in Min(C_0) \subseteq Min(X)$ with $y'< x_0$. Moreover $y' \neq y$, since otherwise $x_0$ would be a down beat point in $C_0$. By continuity, $y = f_1(x_0) \geq f_1(y') = f_0(y') = y'$, which is contradiction.
\end{proof}

\begin{lem}\label{lem2}
Let $\{U_1, \dots, U_n\}$ be an open covering of a finite connected $T_0$-space $X$ such that $U_1$ decomposes in $s$ connected components which are contractible in themselves.
Then there exists a finite $T_0$-space $Y$ and an open covering $\{U'_1,U'_2, \dots U'_n\}$ of $Y$ such that $X$ and $U_i$ are
deformation retracts of $Y$ and  $U'_i$, respectively, for $2\leq i \leq n$, and $U'_1$ consists of $s-1$ connected components contractible in themselves.
Moreover, $height(Y)=max\{ 2,height(X)\}$.
\end{lem}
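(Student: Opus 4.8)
\textbf{Proof plan for Lemma \ref{lem2}.}

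The plan is to reduce the number of connected components of $U_1$ by one at the cost of enlarging $X$ to a homotopy equivalent space $Y$: I would fuse two of the components by adjoining a short ``collar'' of new points, all of level $1$ or $2$, and then check that $X$ (and each $U_i$ with $i\geq 2$) survives as a deformation retract. First dispose of the trivial case $s\leq 1$. Since $X$ is connected, one can pick two components $C$ and $C'$ of $U_1$ that are \emph{adjacent}, meaning that some path in the Hasse diagram of $X$ joins a point of $C$ to a point of $C'$ without meeting any other component of $U_1$ (travel from $C$ towards another component and take the first component re-entered). After passing to a shortest such path we may assume it has the form $x_0\lessdot m_1\gtrdot x_1\lessdot\cdots\lessdot m_k\gtrdot x_k$ with $x_0\in C$, $x_k\in C'$, the $x_i$ minimal points of $X$, and the $m_i$ maximal points of $X$ that are not among the maximal points whose minimal open sets define $U_1$.

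The engine is an \emph{elementary move}. Given a maximal point $m$ of $X$ and two minimal points $x,y$ below $m$, adjoin a new point $p$ at level $1$ with $x,y<p$ and a new point $\omega$ at level $2$ with $p<\omega$ and $m<\omega$. I would check that the enlarged poset strongly collapses back onto the original one: $p$ has $\omega$ as its only point above it, so $p$ is an up beat point; after deleting $p$, the set of points below $\omega$ is exactly $U_m$, whose maximum is $m$, so $\omega$ is then a down beat point; deleting $\omega$ gives back the original poset verbatim, since the only comparabilities ``routed through'' $p$ or $\omega$ are $x<m$ and $y<m$, which held already. Thus the original space is a (strong) deformation retract of the enlarged one, the height has grown to at most $\max\{2,\mathrm{height}(X)\}$, and no comparabilities among old points are altered.

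Now apply this move once for each $i=1,\dots,k$, bridging the consecutive minimal points $x_{i-1},x_i$ (which lie below $m_i$) by a pair $p_i,\omega_i$; let $Y$ be the final space. Iterating the previous paragraph (delete all the $p_i$, which stay up beat points throughout, then all the $\omega_i$, which become down beat with maximal lower neighbour $m_i$) shows $Y$ strongly collapses onto $X$, so $X$ is a deformation retract of $Y$ and $\mathrm{height}(Y)=\max\{2,\mathrm{height}(X)\}$. Put $U_i':=U_i$ for $i\geq 2$: these remain open in $Y$ because the new points lie above no point of any $U_i$, and they trivially deformation retract onto $U_i$. Put $U_1'$ equal to $U_1$ enlarged by all the $p_i$, all the $\omega_i$, and the minimal open sets $U_{m_i}$ (the latter forced by openness, as $\omega_i>m_i$). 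Then $U_1'$ is open, its new part is connected and meets $C$ through $x_0$ and $C'$ through $x_k$, so $C$ and $C'$ now lie in one component while the remaining $s-2$ components are untouched, giving $s-1$ components; and since every new point lies in $U_1'$, the family $\{U_1',\dots,U_n'\}$ still covers $Y$.

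What is left, and what I expect to be the main obstacle, is to verify that the merged component of $U_1'$ is \emph{contractible in itself}. One would collapse it in a careful order: first remove the beat points internal to $C$ and to $C'$ (they are still beat points, the new points sitting above them), then peel off the $p_i$ (up beat) and then the $\omega_i$ (down beat, with maximum $m_i$ in its lower set). This reduces the problem to showing that $C\cup C'\cup\bigcup_i U_{m_i}$ is contractible, a chain of contractible prime open sets glued along the minimal points $x_i$; here the choices of the adjacent pair and of the path must be made so that these gluings are really ``conical'' (e.g.\ that consecutive $U_{m_i}$ overlap in a single minimal point and that each $U_{m_i}$ meets $C\cup C'$, and no other component of $U_1$, in a single minimal point), which is the delicate bookkeeping — the homotopy-theoretic content being routine once the poset is laid out correctly. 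Granting the lemma, Theorem \ref{cat=Cat1} follows by iterating it over all components of all members of an optimal cover of $X$ by prime open sets contractible in $X$ (which exists by Lemma \ref{lemaux1} and Lemma \ref{lem1}) until each member of the cover is contractible in itself, so that $Cat(X)\leq gcat(Y)\leq n=cat(X)$ for the resulting height-$2$ space $Y$; the reverse inequality $cat(X)\leq Cat(X)$ is automatic.
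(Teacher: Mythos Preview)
Your overall plan---connect two components of $U_1$ by a bridge built along a Min/Max-alternating arc in $X$---is exactly the paper's. The gap is in how you orient the bridge. You place the new top point $\omega_i$ \emph{above} the maximal $m_i$; openness of $U'_1$ then forces the whole $U_{m_i}$ into $U'_1$, and this is where the argument breaks, not merely becomes ``delicate bookkeeping''. The set $U_{m_i}$ can meet $C$ (or another component of $U_1$) in more than one minimal point, producing a crown in the merged component that no choice of path can avoid. Concretely, in height $1$ take $\mathrm{Max}(X)=\{a,b,m\}$, $U_a=\{a,1,2\}$, $U_b=\{b,3,4\}$, $U_m=\{m,1,2,3\}$, and $U_1=U_a\cup U_b$ with components $C=U_a$, $C'=U_b$. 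Every arc from $C$ to $C'$ passes through $m$, and $U_m\cap C=\{1,2\}$; after adjoining $p_1,\omega_1$ and collapsing them (up beat, then down beat), your $U'_1$ reduces to $U_a\cup U_b\cup U_m$, whose core is the non-contractible $4$-point circle $\{a,m,1,2\}$. So the ``conical gluing'' proviso you flag is a genuine hypothesis that is not always satisfiable. A secondary problem is the height clause: since $\omega_i>m_i$ with $m_i\in\mathrm{Max}(X)$, the point $\omega_i$ sits at level at least $\mathrm{level}_X(m_i)+1$; once $\mathrm{height}(X)\geq 2$ (as it will be after one iteration) this can give $\mathrm{height}(Y)>\mathrm{height}(X)$, so the ``height $2$'' conclusion of Theorem~\ref{cat=Cat1} is lost under iteration.

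The paper avoids both problems by inserting the new points \emph{below} the arc's maximals rather than above: for each $k$ one adds a single $q_k$ with $x_{2k},x_{2k+2}<q_k<x_{2k+1}$, and no $\omega$'s at all. Then $q_k$ is an up beat point (only $x_{2k+1}$ lies above it), so $Y$ still deformation-retracts onto $X$; and $q_k$ sits at level $1$, yielding $\mathrm{height}(Y)=\max\{2,\mathrm{height}(X)\}$ exactly. Crucially, the bridge $\widehat\Gamma=\{x_0,q_0,x_2,q_1,\dots,x_{2m}\}$ is \emph{already open} in $Y$ because $U^Y_{q_k}=\{q_k,x_{2k},x_{2k+2}\}$, so $U'_1=U_1\cup\widehat\Gamma$ contains none of the sets $U_{m_i}$. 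With the arc trimmed so that it meets $U_1$ only at its endpoints $p,q$, the merged component is literally $C_1\cup\widehat\Gamma\cup C_l$ glued along two single points, and its contractibility is immediate. The $q_k$'s are added to those $U'_j$ ($j\geq2$) with $x_{2k+1}\in U_j$, where they remain up beat, so those deformation retractions go through as well.
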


\begin{proof}
Firstly we observe that all minimal open sets in a finite $T_0$-space meets $Min(X)$ and moreover, given any two minimal elements $p,q\in Min(X)$ there exists a path in $X$
\begin{equation}\label{goodpath}
p = x_0 \leq x_1\geq x_2 \leq \dots \leq x_{2m-1}\geq x_{2m}= q
\end{equation}
with $x_{2k} \in Min(X)$ and $x_{2k-1}\in Max(X)$ for all $k\leq m$.
Indeed, given any path $L$ between $p$ and $q$: $L \equiv p =y_0 < y_1 > y_2 \dots < y_{2m+1} > y_{2m} = q$,
choose $x_{2k}\in Min(X) \cap U_{y_{2k}}$ ($0\leq k \leq m$).
Notice that $x_0 = y_0 = p$, $x_{2m} = y_{2m} = q$ and the desired path in obtained by replacing
the $y_{2k}$'s by $x_{2k}$'s in $L$ since $y_{2k+1}$ remains greater than both $x_{2k}$ and $x_{2k+2}$ for all $0\leq k \leq m$.
Finally we replace each $y_{2k-1}$ by some $x_{2k-1}\in\overline{\{y_{2k-1}\}}\cap Max(X)$, $1\leq k\leq m$.
\par
Assume that $\{C_j\}_{l=1}^s$ is the family of connected components of $U_1$. Let $\Gamma$ be a path as in (\ref{goodpath}) joining some $p\in Min(X)\cap C_1$ to some $q\in Min(X) \cap C_l$ for some $2\leq l \leq s$. We can assume that $\Gamma$ is an arc (that is, $x_i\neq x_j$ if $i\neq j$) by removing all cycles in $\Gamma$.
\par
We can also assume without loss of generality that $\Gamma \cap U_1$  reduces to $\{p, q\}$ since otherwise we can consider the
subarc $\Gamma' \subseteq \Gamma$  running from the last element of $\Gamma$ in $Min(X) \cap C_1$ to the first element of
$\Gamma$ in $Min(X) \cap (U_1-C_1)$. In particular, $x_{2k-1}\notin U_1$ for all $1\leq k\leq m$.
\par
For each $0\leq k \leq m-1$, we will write $j=j(k)$ if $x_{2k+1}\in U_{j}$, with $j=2,\dots, n$, and take some point $q_k \notin X$.
Consider the space $Y= X\cup \{q_k\}_{k=1}^m$ with the ordering generated by the one on $X$ and $x_{2k}, x_{2k+2} < q_k < x_{2k+1}$  ($0\leq k \leq m-1$).
Notice that  the minimal open sets of $q_k$ and $x_{2k+1}$ in $Y$ are $U^Y_{q_k} = \{q_k, x_{2k}, x_{2k+2}\}$ and $U^Y_{x_{2k+1}} = U_{x_{2k+1}}\cup \{q_k\}$.
This determines a new arc $\widehat{\Gamma}\equiv  p =y_0 < q_1 > y_2 \dots < q_{2m+1} > y_{2m} = q$ between $p$ and $q$.
Notice that $\widehat{\Gamma}$ is open in $Y$ and, clearly, contractible.
\par
Let $U'_1 = U_1\cup \widehat{\Gamma}$ and $U'_j = U_j \cup \{q_k; j= j(k)\}$ ($2\leq j \leq n$). These sets are open sets in $Y$  and all the points $q_k$ are
up beat points in $Y$ and those $q_k$ with $j= j(k)$ are, in addition, up beat points in $U'_j$.
Hence $Y$ and $U'_j$ retract onto $X$ and $U_j$, respectively. Moreover, the family of connected components of $U'_1$ consists
of the union $C_1\cup \widehat{\Gamma} \cup C_l$ and the former components $C_i$ with $i\neq 1,l$.
\par
Finally, as $(C_1\cap C_l)\cap \widehat{\Gamma} = \{p,q\}$ and $C_1$, $C_l$ and $\widehat{\Gamma}$ are contractible, so is the component $C_1\cup \widehat{\Gamma} \cup C_l$.
\par
Finally we observe that, since the $x_{2k-1}$'s are maximal elements, one gets $height(Y)= height(X)$ if $height(X) \geq 2$ and $height(Y)= 2$ otherwise.
\end{proof}

\begin{proof} (Theorem \ref{cat=Cat1})
Assume $cat(X) = n$ and let $X = \cup_{j=1}^n U_j$  be an open covering where each $U_j$ is contractible in $X$. Then Lemma \ref{lem1} yields that each $U_j$ is a disjoint union of open sets contractible in themselves.
\par
By applying Lemma \ref{lem2} recursively on the number of connected components of $U_1$ we construct a finite space $X_1$ with the same homotopy type as $X$ such that $X_1 = \cup_{j=1}^n U^1_j$ where $U^1_j$ is contractible and each $U^1_j$ is homotopy equivalent to $U_j$ for $j\geq 2$.
In particular each $U^1_j$ has the same number of (contractible) components as $U_j$ for $j\geq 2$.
\par
 Now one applies recursively Lemma \ref{lem2} to $U^1_2$ in $X_1$ to get a new space $X_2$ with the same homotopy type as $X_1$ such that $X_2 = \cup_{j=1}^n U^2_j$ is an open covering of $X_2$ where $U^2_j$ is contractible and $U^2_j$ is homotopy equivalent to $U^1_j$ whenever $j\neq 2$. In particular $U^2_1$ is contractible.
\par
This way we proceed inductively on $n$ to get a space $X_n$ within the homotopy class of $X$ such that $X_n$ admits an open covering $\{U^n_j\}_{j=1}^n$ where each $U^n_j$ is contractible.
Hence $Cat(X) \leq n = cat(X)$ and so $Cat(X) = cat(X)$.
As observed Lemma \ref{lem2}, we have $height(X_n) = \max\{2, height(X_{n-1})\} = \dots = \max\{2,height(X)\} = 2$.
\end{proof}

\begin{rem}\label{crown}
{\rm
As an immediate consequence of Lemma \ref{lem1} we get that an open set $U$  is contractible in a finite space of height 1  if an only if it does not contain a cycle, that is, there is no sequence of the form $x_1,y_1, x_2, y_2, \dots, x_n$ with $x_1 = x_n$ and $x_i > y_i < x_{i+1}$ for $1\leq i \leq n-1$.
\par
Cycles are special instances of the configurations termed crowns; see \cite{Rival}[Th. on p. 310]and \cite{DuffusRival}.
}
\end{rem}

As mentioned in Section \ref{notation}, a lower bound for the LS-category of a finite $T_0$-space is given by the  simplicial category of its order complex $\mathcal{O}(X)$, $scat(\mathcal{O}(X))$. Moreover, if $K$ is a graph, then $scat(K)$ is equal to the vertex arboricity $va(K)$ of $K$, see \cite{vilches15}. Recall that given a graph $G$ the \emph{vertex arboricity} $va(G)$ of $G$ is the minimum $n$ for which the vertex set of $G$ admits a partition $P_n=\{V_1,\ldots,V_n\}$ such that for every  $i=1,\dots,n$  the subgraph of $G$ induced by the vertices of $V_i$ is acyclic.

On the other hand, an upper bound for the LS-category of finite spaces of height 1 can be obtained in terms of vertex arboricity for multigraphs.

Given a space $X$ of height 1, let us define the multigraph $\Gamma(X)$ associated to $X$ in the following way: the vertex set of $\Gamma(X)$ is $Max(X)$; a simple edge exists between any two vertices $v, w\in Max(X)$ if the corresponding minimal open sets $U_v$ and $U_w$ have exactly one common neighbour; and a double edge connects two vertices $v, w \in Max(X)$ if the corresponding minimal open sets $U_v$ and $U_w$ have more than one common neighbour.

A description of the LS-category in the terms of graph arboricity is accomplished by a sort of relative arboricity as follows. Given a graph $G=(V,E)$, a \emph{domination set} of $G$ is a set $D\subseteq V$ such that each vertex in $V\backslash D$ is adjacent to some vertex of $D$. Then the \emph{$D$-arboricity $a_D(G)$ of $G$}, is the minimum $n$ for which $D$ admits a partition $\{D_1,\ldots,D_n\}$ such that for every  $i=1,\dots,n$  the subgraph of $G$ given by the union of the edges of $G$ incident to the vertices in $D_i$ is acyclic. The previous remarks are summarized in the following proposition:

\begin{prop}\label{prop1}

The following (in)equalities hold for any finite space of height 1,
$$
va(\mathcal{O}(X))\leq cat(X)=a_{Max(X)}(\mathcal{O}(X)) \leq va(\Gamma(X)).
$$

\end{prop}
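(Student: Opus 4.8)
\textbf{Proof proposal for Proposition \ref{prop1}.}

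The plan is to split the statement into its three parts: the leftmost inequality $va(\mathcal{O}(X))\leq cat(X)$, the central equality $cat(X)=a_{Max(X)}(\mathcal{O}(X))$, and the rightmost inequality $a_{Max(X)}(\mathcal{O}(X))\leq va(\Gamma(X))$. For a height $1$ space, the order complex $\mathcal{O}(X)$ is exactly the graph given by the Hasse diagram (vertices $Min(X)\cup Max(X)$, edges the covering relations), so all the simplicial notions here are purely graph-theoretic.

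First I would dispose of the leftmost inequality: this is already recorded in the text ($cat(X)\geq scat(\mathcal{O}(X))$ from (\ref{desig1}), together with the cited fact from \cite{vilches15} that $scat(K)=va(K)$ for a graph $K$), so nothing new is needed. The core of the proof is the central equality $cat(X)=a_{Max(X)}(\mathcal{O}(X))$. By Lemma \ref{lemaux1}, $cat(X)$ equals the least size of a cover of $X$ by prime open sets $U_{J_1},\dots,U_{J_n}$ (with $J_i\subseteq Max(X)$) each contractible in $X$; and by Remark \ref{crown} / Lemma \ref{lem1}, $U_J$ is contractible in $X$ if and only if it contains no cycle, i.e.\ the subgraph of $\mathcal{O}(X)$ spanned by the edges incident to the vertices of $J\subseteq Max(X)$ is acyclic. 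Since $\{U_{J_i}\}$ covers $X$ exactly when every vertex of $Min(X)\cup Max(X)$ lies in some $U_{J_i}$, and since every minimal point lies below some maximal point, such a family covers $X$ precisely when $\bigcup_i J_i=Max(X)$; replacing the $J_i$ by a disjoint refinement (which only shrinks the induced edge sets, preserving acyclicity) we may take $\{J_1,\dots,J_n\}$ to be a \emph{partition} of $Max(X)$. This translation shows a cover of $X$ by $n$ prime open sets contractible in $X$ is the same datum as a partition of $Max(X)$ into $n$ blocks each inducing an acyclic edge-subgraph. Finally one observes that $Max(X)$ is a domination set of the graph $\mathcal{O}(X)$ (every minimal point is adjacent to a maximal one, trivially), so this is exactly the definition of $a_{Max(X)}(\mathcal{O}(X))$, giving the equality.

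For the rightmost inequality $a_{Max(X)}(\mathcal{O}(X))\leq va(\Gamma(X))$, I would take a partition $\{W_1,\dots,W_k\}$ of $Max(X)$ (the vertex set of $\Gamma(X)$) witnessing $va(\Gamma(X))=k$, so each $W_i$ induces an acyclic subgraph of $\Gamma(X)$, and argue that the same partition witnesses $a_{Max(X)}(\mathcal{O}(X))\leq k$. The point is to show that if the edge-subgraph of $\mathcal{O}(X)$ spanned by edges incident to $W_i$ contained a cycle, then $\Gamma(X)[W_i]$ would contain a cycle. A cycle in $\mathcal{O}(X)$ through vertices of $W_i$ is, as in Remark \ref{crown}, a sequence $v_1,y_1,v_2,y_2,\dots,v_1$ with $v_j\in W_i\subseteq Max(X)$, $y_j\in Min(X)$, and $v_j>y_j<v_{j+1}$; each consecutive pair $v_j,v_{j+1}$ then shares the common neighbour $y_j$, hence is joined by an edge in $\Gamma(X)$, producing a closed walk in $\Gamma(X)[W_i]$. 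The only subtlety is that this closed walk might be degenerate (length $2$, i.e.\ $v_j=v_{j+2}$ via two neighbours $y_j\neq y_{j+1}$) rather than a genuine cycle — but then $v_j$ and $v_{j+1}$ have \emph{two} common neighbours, so $\Gamma(X)$ has a \emph{double} edge between them, which already counts as a cycle in the multigraph $\Gamma(X)$; and any non-degenerate closed walk contains a genuine cycle. Hence $\Gamma(X)[W_i]$ acyclic forces the $\mathcal{O}(X)$-edge-subgraph acyclic, completing the inequality.

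The main obstacle I anticipate is purely bookkeeping in the central equality: getting the ``cover $\leftrightarrow$ partition'' reduction exactly right (one must be careful that shrinking $J_i$ to a disjoint refinement does not destroy the covering property — it does not, since it is the maximal points that must be covered and they are partitioned, while the minimal points are covered automatically), and verifying that ``contractible in $X$'' for a prime open set is genuinely equivalent to acyclicity of the induced edge-subgraph via Lemma \ref{lem1} and Remark \ref{crown}, rather than to acyclicity of the induced \emph{vertex}-subgraph (these differ, and the ``incident edges'' formulation in the definition of $a_D$ is what matches). Once those identifications are pinned down, all three parts are short.
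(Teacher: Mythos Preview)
Your proposal is correct and follows essentially the same approach as the paper: the first inequality is dismissed as known, the middle equality is obtained via Lemma~\ref{lemaux1} and Lemma~\ref{lem1}/Remark~\ref{crown} by identifying contractible-in-$X$ prime open sets with acyclic edge-subgraphs, and the last inequality is proved by showing that a vertex-arboricity partition of $\Gamma(X)$ also works as a $Max(X)$-arboricity partition of $\mathcal{O}(X)$. If anything, your treatment is more careful than the paper's in two places --- you spell out the cover-to-partition reduction, and you explicitly handle the degenerate length-$2$ closed walk via the double-edge convention in the multigraph, a point the paper's proof leaves implicit.
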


\begin{proof}
Observe that the first inequality is already known. The equality in the middle follows from the fact that if $U_J$ is a prime open set, then the corresponding subgraph $G_J \subseteq \mathcal{O}(X)$ generated by the edges incident at vertices of $J$ is acyclic if and only if each component of $U_J$ is contractible. We conclude by Lemma \ref{lem1}.

To check the second inequality, let $\{J_1, \dots, J_n\}$ be a partition of  $Max(X)$ such that $\Gamma_i(X) =
\cup_{j\in \Lambda_i} T_j$ is a forest. Let $V_j$ be the vertex set of the tree $T_j$, and $U_j = \cup_{x\in V_j} U_x$. Now observe that the open sets $U_i = \cup _{j\in\Lambda_i} U_j$ do not contain cycles (see Remark \ref{crown}). Indeed, any such cycle  yields a cycle in $T_j$, in the classical graph-theoretic sense. Therefore, $U_i$ is contractible in $X$ by Remark  \ref{crown}, and then $cat(X) \leq va(\Gamma(X))$.
\end{proof}

The upper bound provided by the multigraph $\Gamma(X)$ may not be very accurate, as the following example shows:

\begin{example}{\rm
For $n = 2k$, let $X$ be the finite space of height 1 with $ A = \{a_0, a_1, \dots a_n\}$ as points at level $0$, $\{b_0, \dots, b_n\}$ as points at level $1$, and such that
$U_{b_0} = \{b_0\}\cup A$, while $U_{b_i} = \{b_i, a_0,a_i\}$ for $i\geq 1$. Then $U_1 = \cup_{i=1}^n U_{b_i}$ and $U_2 = U_{b_0}$ are contractible open sets such that $U_1\cup U_2=X$, so that $ cat(X) = 2$. In contrast, no set with three or more $b_i$ ($i\geq 1$) as well as no set containing $b_0$ and some $b_i$ ($i\neq 0$) generates an acyclic graph in $\Gamma(X)$, and so $va(\Gamma(X)) = k +1$.
}
\end{example}

\section{A contractibility algorithm}

\label{compalg}

 In this section we present an algorithm to obtain the core of a finite $T_0$-space $X$, and in particular deciding whether $X$ is eventually contractible or not. The algorithm is based on the following basic observation which is  an immediate consequence of the homotopical properties of beat points.

\textbf{Fact.} If $X$ is a contractible space and $x$ is a beat point of $X$, then $X-\{x\}$ is contractible.

We next describe the procedure.

 \textsc{Algorithm:} \textsc{Contractibility Test (CT)}

 \textbf{Input:} A finite $T_0$-space $X$ as a poset.

 \textbf{Output:} The core of $X$ and the answer to the question ``{\it Is $X$ contractible?}"

\begin{enumerate}
	
	\item Identify all the beat points of $X$.
	
	\item If $X$ has no beat points and is itself a point, STOP. Otherwise, go to 3).
	
	\item If $X$ has no beat points and is not a point, STOP (and we have reached the core of $X$). Otherwise, go to 4).
	
	\item Remove the beat points identified in 1).
	
	\item Rename the obtained space as $X$.
	
	\item Go to 1).
	
\end{enumerate}

  As we are dealing with finite spaces, the algorithm terminates. If it terminates in some iteration at Step 2, the original space was contractible, while if it stops in some iteration at Step 3, the original space was not. Next we prove that this procedure finishes in polynomial time.

 \begin{lem}
 	For a finite space $X$ of cardinal $n$, obtaining its core and detecting its contractibility has time complexity  of order ${\mathcal O}(n^4)$.
 \label{comple}
 \end{lem}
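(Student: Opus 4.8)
The plan is to bound the cost of one pass through Steps 1--6 and then multiply by the number of passes. First I would analyze Step 1, the identification of all beat points. For a fixed point $x$, deciding whether $x$ is an up beat point amounts to checking whether the set $\{y : y > x\}$ has a minimum; since this set is computed by reading off the covering relation (equivalently, the strict order, which we may store as an $n\times n$ Boolean matrix), it can be scanned in $\mathcal{O}(n)$ time to test whether one of its elements sits below all the others, which costs $\mathcal{O}(n^2)$ per point in the naive implementation, or $\mathcal{O}(n)$ if one first records for each $y$ the cardinality of $\{z : z>x, z\geq y\}$. Either way, summing over all $x\in X$ gives $\mathcal{O}(n^3)$ (or better) for Step 1; the same bound holds for down beat points by the dual argument. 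Steps 2 and 3 are $\mathcal{O}(n)$ tests.

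Next I would bound Step 4, the removal of the beat points found in Step 1. The subtle point here is the one flagged in Section \ref{notation}: when a point $x$ with $y<x<z$ is deleted, the Hasse diagram must be updated by inserting the edge $y<z$, so the data structure does change non-trivially. However, if we maintain the full strict-order relation rather than only the Hasse diagram, deletion of a point is simply the removal of one row and one column of the Boolean matrix, which is $\mathcal{O}(n^2)$; transitive closure need not be recomputed because deleting a point from a poset leaves the induced order on the remaining points unchanged. Removing all the beat points identified in a single pass therefore costs $\mathcal{O}(n^2)$ in total. Step 5 (relabelling) is $\mathcal{O}(n^2)$ at worst. Hence one full iteration of the loop costs $\mathcal{O}(n^3)$.

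Finally I would bound the number of iterations. By the \textbf{Fact} and Stong's theorem recalled before the algorithm, every pass that does not halt removes at least one point (indeed at least one beat point exists), so the space strictly shrinks; thus the loop executes at most $n$ times. Multiplying, the total running time is $\mathcal{O}(n)\cdot\mathcal{O}(n^3)=\mathcal{O}(n^4)$, and since the algorithm halts at Step 2 precisely when the original space was contractible and at Step 3 with the core otherwise, this establishes both claims. The main obstacle I anticipate is purely bookkeeping: being careful about which representation of the poset (Hasse diagram versus full order) is used, since the remark in Section \ref{notation} shows the Hasse diagram is not stable under point deletion, and an implementation that insisted on keeping only the Hasse diagram would incur the extra cost of locally recomputing covering edges after each removal. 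Recording the full order matrix sidesteps this and keeps each pass cubic; if instead one wishes to argue directly with the Hasse diagram, one notes that recomputing the covering relation from the strict order is itself only $\mathcal{O}(n^3)$, so the bound is unaffected.
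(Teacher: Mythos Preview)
Your proposal is correct and reaches the same $\mathcal{O}(n^4)$ bound, but by a genuinely different bookkeeping choice than the paper's argument. The paper works throughout with the Hasse diagram (the adjacency matrix of the covering relation): there a beat point is simply a vertex of in-degree or out-degree $1$, so detecting all beat points costs only $\mathcal{O}(n^2)$ per pass; the price is that after removing a beat point the resulting digraph need not be a Hasse diagram, and the transitive reduction algorithm of \cite{Gries} (cost $\mathcal{O}(n^3)$) must be re-applied before the next pass. Summing $\mathcal{O}(k^3)$ over the at most $n$ passes gives $\mathcal{O}(n^4)$. You instead store the full strict-order matrix: beat-point detection becomes more expensive ($\mathcal{O}(n^3)$ per pass, since one must test whether a strict up-set has a minimum rather than just counting covers), but point deletion is then just row/column removal and no transitive reduction is ever needed. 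Both routes are sound; yours avoids the external subroutine and the instability of the Hasse diagram under deletion that you flag, while the paper's has cheaper detection balanced by a heavier update. You explicitly anticipate this trade-off in your closing remarks, so nothing is missing.
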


 \begin{proof}
  Firstly, the {\it transitive reduction algorithm} described in \cite{Gries} must be performed in order to obtain a  Hasse diagram of the space $X$.	Let $D=(V,E)$, $V=\{v_1, \, v_2, \dots, v_n\}$,  be the digraph representing the Hasse diagram of $X$. 
  By  abuse of notation, we say that a vertex $b$ is a  beat point   of $D$ if it corresponds to a beat point of $X$. A vertex $b$ of $D$ is 
  a beat point if and only  if $b$ has indegree or outdegree (possibly  both) equal to one in $D$.
 	
 	Let us consider the adjacency  matrix $A=(a_{ij})_{n\times n}$ of $D$, defined by $a_{ij}=1$ if $(v_i, v_j)\in E$ and $a_{ij}=0$ in other case. For each beat point $b$ with indegree $\delta_i(b)=1$ and outdegree  $\delta_o(b)=k>0$ let us consider the edge $(a,b)$ providing the only in-edge.  Removing a beat point produces a sequence of operations over the matrix $A$ that consists of:
 	\begin{enumerate}
 		\item[(1)] Replacing the row of $a$ by the addition of the rows of $a$ and $b$  
 	and any out-edge $(b,v_j)$    by a new out-edge $(a,v_j)$. 
 	\item[(2)]  Deleting the column and the row corresponding to  $b$.
 	\end{enumerate}
 For each beat point $b$ with  $\delta_o(b)=1$ and  $\delta_i(b)=h>0$ an analogous method is used, by interchanging the roles of columns and rows in operations (1) and (2).
 	
 	This way, after the removal of all beat points of $D$, a new acyclic digraph $D'=(V', E')$ is obtained verifying $|V'|\leq |V|-1$ and $|E'|\leq |E|-1$. Two situations are possible:
 	\begin{enumerate}
 		\item $D'$ is the Hasse diagram of a new space $X'$, that is, the transitivity property of $X'$ is reflected in $D'$ ($(a,b), (b,c) \in E'$ implies $(a,c)\notin E'$).
 		\item $D'$ is not the Hasse diagram of any space since there are three edges $(a,b), (b,c), (a,c) \in E',$  for some vertices $a, b, c \in V'.$
 	\end{enumerate}
 	
 	In (1) new beat points must be searched for $D'$, while in (2)   the {\it transitive reduction algorithm} must be performed again in order to obtain a new digraph corresponding to a Hasse diagram of a new space $X'$.
 	
 	Now, we study the time complexity of the entire procedure. Detecting the set of beat points of $D$ can be done in     ${\mathcal O}(n^2)$  time, since it suffices to find out in the matrix $A$ arrows with only  one 1  and columns with only  one 1.
 	It is clear that, for a beat point $b$,  operations (1) and (2) have time complexity of constant order. 
 	According to  \cite{Gries}, the time complexity  for obtaining the transitive reduction of an acyclic digraph is ${\mathcal O}(n^3)$, being $n$ the order of the digraph.
 	In the worst case, only one beat point is eliminated in each step, and the vertex set decreases by one after each removal. Hence we can conclude that the total time complexity in the worst case is of order $\sum_{k=1}^{n} k^3 =\frac{n^2(n+1)^2}{4}$, that is ${\mathcal O}(n^4).$
 \end{proof}

\section{An algorithmic approach to the geometric category}
\label{identify}

In this section we explore possible algorithmic treatments of the geometric category and the other related numerical parameters $Cat_u$ and $gcat_p$ for a finite $T_0$-space $X$. With this purpose we introduce the following definition: a subset $J \subset X$ is said to be \emph{compatible} if the open set $U_J = \cup_{x\in J} U_x$ is contractible.
\par
We next proceed to analyze the compatibilities. By using the CT-algorithm, which allows us to decide whether $J$ is compatible or not. Moreover, in order to determine $Cat_u(X)$ we first apply the CT-algorithm to find the core $X_0$ of $X$. For the prime strong geometric category $gcat_p(X)$, we first reduce $X$ to its set of maximal elements and then analyse the compatibilities among them.
\par

There are two possible approaches when undertaking this calculation. The first one consists in identifying the compatibility (or not) of \emph{all} subsets of $X$ for $gcat(X)$, $Max(X)$ for $gcat_p$ and $Max(X_0)$ for $Cat_u(X)$,  and then deriving from this information the exact value of the corresponding numerical parameter (or, equivalently, the covering number of an appropriate hypergraph associated to $X$ in Section \ref{monocomp}). We describe below greedy algorithms that work in this context. Unfortunately, there will be instances of spaces for which these procedures will need exponential time to solve the problem, although they should be useful for ``small" cases. On the other hand, we may investigate bounds for these numerical parameters without computing the compatibility (or not) of \emph{all} subsets of $X$ ($Max(X)$ or $Max(X_0)$, respectively) but only of a number of them that involve a polynomial number of operations. This can be achieved by restricted applications of the mentioned algorithm, as we will also see below.

We will proceed to describe the algorithm for the geometric category $gcat(X)$. In a similar way we can deal with $gcat_p(X)$ ($Cat_u(X)$, respectively) simply by taking below the smaller set $Max(X)$ ($Max(X_0)$, respectively) in the place of $X$ and prime open sets in the place of all open sets of $X$ ($X_0$, respectively).

Observe that, for $|X| = n$, there are $2^n-1$ non-empty different collections of such open sets. As singletons are clearly compatible, in principle we should take account of $2^n-n-1$ collections.

\subsection{Greedy strategies}
\label{greedy}

Given $k\geq 2$,  we present two systematic greedy procedures to check the compatibility of collections of $k$   minimal open sets  that at the same time provides upper bounds for $gcat(X)$.
\par
\noindent{\bf Notation.} For simplicity, given a finite $T_0$-space with $|X| = n$, we will write $X = \{1,\dots, n\}$ so that for $J=\{i_1, i_2, \dots, i_k\}$ with $1\leq i_1<i_2<\dots <i_k\leq n$, we denote $U_J=\cup_{i\in J}U_i$ simply by  $(i_1, i_2, \dots, i_k)$.  The number $k$ will be called the \emph{length} of the collection $J$.

Let us now describe the first algorithm, which we will call \emph{U-algorithm}; the name comes from the fact that we check sets of prime open sets with growing length, ``going up".

 \textsc{$U$-Algorithm:}

\textbf{Input:} The family of minimal open sets $U_J$ with  $J \subset X$,  $|X|=n$.

\textbf{Output:} An upper bound for $gcat(X)$, or $gcat(X)=n$.

\begin{enumerate}

\item Set $k=2$; $bound=n$.

\item For  $k\leq n$ check {\sc CT} for the $\binom{n}{k}$ different collections of minimal open sets of length $k$.

If at least one is compatible, then set $bound=n-k+1$, $gcat(X)\leq bound$

else $gcat(X)\leq bound.$

 $k=k+1$.

\item If $bound < n$ set  $gcat(X)\leq bound $ and STOP

else set  $gcat(X)=n$ and STOP.

\end{enumerate}

\begin{remark}
If there is a collection $C=\{i_1,\dots,i_k\}$ of length $k$ that is compatible, then the covering given by $(i_1,\dots,i_k)$ and the remaining open sets assures that $gcat(X)\leq n-k+1$. The algorithm does not give in general an exact value for the $gcat(X)$, because the coverings whose length gives $gcat(X)$ could not be identified by the algorithm. The exceptions are the cases $k=n-1$ and $k=n$, where the equality $gcat(X)=2$ or $gcat(X)=1$  is immediately deduced from the corresponding inequality.
\end{remark}

Note that a dual algorithm (\textsc{$D$-Algorithm}) is possible, starting from the unique collection of length $n$, continuing with the collections of length $n-1$ and so on. The output in this case in an upper bound for the category. Although the algorithms are basically symmetric, there is an difference between them. In the $U$-algorithm, every step can improve the bound obtained in the previous one. In the dual version, however, the occurrence of a value for the bound of the category in some step implies immediately that the algorithm stops, as the bound cannot be improved in subsequent steps.

On the other hand, the compatibility structure of the minimal open sets is completely described when you run until the end any of the two algorithms; this is important in order to get an exact computation of $gcat(X)$ using the tools of hypergraph theory described in last section.

In general, the complexity of the algorithms (when computing the whole compatibility structure) is exponential in the number $n$ of open minimal sets, as $2^n-n-1$ collections of open minimal sets must be checked. However, if we are only interested in bounds for the category and we run the algorithm only until some fixed length, the complexity is \emph{polynomial} in $n$. For a certain length $k$, we will call $U(k)$ and $D(n-k)$ the shortened versions of the $U$-algorithm and its dual, respectively. In the case of the $U(k)$-algorithm, the fact that there are $\binom{n}{k}$ different collections of $k$ minimal open sets and that checking the CT-algorithm for a collection of  $k$ minimal open sets has complexity $O(k^4)$ (Lemma \ref{comple}) implies that the complexity of this algorithm is $O(n^kk^4)$. A similar reasoning for the $D(n-k)$-algorithm gives a complexity of   $O(n^{k+4})$, bigger than the previous one. So from the point of view of the complexity, the $U$-algorithm is better than its dual, the $D$-algorithm.

 It could be also interesting to combine both approaches, starting from the set of all minimal open sets, then checking pairs, then checking collections of length $n-1$, then triplets, and so on. Alternatively, one can start by checking compatibilities over a random sample of collections of minimal open sets, to get a clue over the number of compatibilities: a high number of them would recommend to use the first algorithm, while a low number would indicate that the dual version would be more useful.

\par
\medskip

\subsection{Heuristics}
\label{heur}
In this subsection we suggest heuristic ways of obtaining bounds for the geometric category of $X$. Recall that $X = \{1,\ldots, n\}$ and contractible open sets
are identified as ordered compatible subsets of $X$. The goal is to find a covering of $X$ consisting of a number of contractible open sets as close as possible to $gcat(X)$. We follow the notation described at the beginning of Section \ref{greedy}.

\textbf{Heuristic 1}. First apply the CT-algorithm to $(1,2)$. If this set is contractible, then apply the CT-algorithm  to $(1,2,3)$, and iterate the process until the smallest $k$ such that $\{1,2,3,\ldots,k\}$ is not compatible is found. Then one can proceed in different ways, in order to identify a covering, which in general have linear or at most polynomial complexity. For example, consider the following scheme. The first element of our covering will be $(1,2,3,\ldots,k-1)$. Apply the {\sc CT}-algorithm to $(k,k+1)$. If this set is contractible, apply the {\sc CT}-algorithm to $(k,k+1,k+2)$, and iterate the process until you find the smallest $j$ such that $\{k,k+1,\ldots,(k+j)\}$ is not compatible.  The second element of the covering will then be $(k,k+1,\ldots,k+j-1)$. Now iterate the process starting with the contractibility of $(k+j,k+j+1)$. The process finishes when $\{n\}$ has been assigned to a member of the covering. It is easy to develope variations of this scheme, checking for example $(1,2,\ldots,k-1,k+1)$ after $(1,2,3,\ldots,k-1)$ at first, and/or considering different ways of perform the updating in the covering.

\textbf{Heuristic 2}. A different approach is to fix a certain $k<n/2$, and to check the compatibility of all the subcollections of length smaller or equal than $k$. This can be always undertaken in polynomial time of degree $k$. Observe that $k$ should be independent of $n$ in order to get the polynomial complexity. After all these compatibilities have been checked, we give an order to the set ${\mathcal A}$ of compatible subcollections (where the unitary ones are included), and we call $A_i$ the $i$-th subcollection in this ordered set. Now we can define a covering $\{U_1,\ldots,U_j\}$ in the following way: $U_1=U_{A_1}$; $U_2=U_{A_{i_1}}$ where  $A_{i_1}\in {\mathcal A}$ with smallest subindex such that $A_{i_1}\cap A_1=\emptyset$ (alternatively, $A_{i_1} - A_1\neq \emptyset$); $U_3=U_{A_{i_2}}$ with  $A_{i_2}\in {\mathcal A}$ with smallest subindex such that  $A_{i_2}\cap A_1=A_{i_2}\cap A_{i_1}=\emptyset$ (alternatively, $A_{i_2} -  (A_1\cup A_{i_1}) \neq \emptyset$); and so on. Observe that different orders give rise to different coverings, and that checking all the possible order allows to obtain the covering of smallest cardinal that can be constructed with collections of length smaller or equal than $k$.

Let us also remark that in Heuristic 1 the {\sc CT}-algorithm is applied a linear number of times (precisely $n-1$ times) while in Heuristic 2 the complexity time needed to check the compatibilities is of polynomial order.

\section{Compatibility structures, Boolean functions and hypergraphs}\label{monocomp}

The structures of compatibility defined by the geometric category and the related LS-type numbers $Cat_u$ and $gcat_p$ can be modeled by a Boolean function that sends compatible sets (possibly empty) to $0$ and incompatible sets to $1$. An analogous notion of compatibility can be defined for $cat(X)$ on $\mathcal{P}(Max(X))$ by considering a prime open set $J\subseteq Max(X)$ to be \emph{$cat$-compatible} if $U_J$ is contractible inside of $X$ (see Lemma \ref{lemaux1}).

Observe that if an open set $U$ is contractible inside of $X$, every subset of $U$ is so. Hence, subsets of compatible collections in the sense of $cat$ are always compatible, and the Boolean function for $cat$ is monotonic. This is not the case for the other LS-type numbers since open subsets of contractible open sets need not be contractible (for instance, the open set $U_{d_1}\cup U_{d_2}$ in the space of Example \ref{Expalo}  is not contractible but it is part of the contractible minimal open set $U_b$).
\par
Although the $cat$-compatibility structure yields a monotonic Boolean function, the algorithmic approach in Section \ref{compalg} does not apply to $cat$ since  a starting algorithm similar to the Contractibility Test is quite more complex for the LS-category because checking whether or not a certain subset $U\subseteq X$ is contractible in $X$ requires devising an algorithm which explores all possible homotopies starting from the inclusion of $U$. It is worth pointing out that Tanaka \cite{tanaka2} describes homotopies between finite spaces in terms of relation matrices.
\par
Notice also that the strong category $Cat(X)$ requires, in principle, analyzing the geometric category of all spaces within the homotopy type of $X$.
\par

The following definition provides a general framework in terms of Boolean functions for the LS-type invariants in this paper except for the strong category. Let $\sigma: \mathcal{P}(Z) \to \{0,1\}$ be a (monotonic) Boolean function on a set $Z$. We call $\sigma$-\emph{category} of $Z$ and denote $\sigma$-$cat(Z)$ the smallest integer $n$ for which there exists a covering $\{U_i\}_{i=1}^n$ of $Z$ such that each $U_i$ is $\sigma$-compatible; that is, $\sigma(U_i)=0$.

\par

There is a natural hypergraph arising from a compatibility structure on a finite $T_0$-space $X$. More generally, any Boolean function $\sigma: \mathcal{P}(Z) \to \{0,1\}$ has associated a hypergraph such that $\sigma$-$cat(Z)$ coincides with the covering number of the hypergraph as defined in graph theory.

Recall that given a finite set $V=\{v_1,\dots ,v_r \}$ an \emph{hypergraph} $H$  in $V$ is a family $E=\{E_1,\dots , E_m \}$ of subsets of $V$ so that $E\neq\emptyset$ and $V=\cup_{i=1}^m E_i$. The elements $v_i\in V$ are called \emph{vertices} and the sets $E_j\in E$ are the \emph{hyperedges} of ${H}$. A hypergraph is termed \emph{simple} or a \emph{Sperner hypergraph} if all its hyperedges are maximal, that is,  if $e'\subseteq e\in E$ then $e=e'$.
\par
Given a hypergraph $H=(V,E)$, an \emph{hyperedge covering} of $H$ is a family of hyperedges of $H$ whose union is the vertex set $V$. The \emph{covering number} $\rho(H)$ of $H$ is the minimum cardinality of a hyperedge covering of $H$. When $H$ is not Sperner, we may consider its Sperner subhypergraph $\widehat{H}= (V,\widehat{E})$, where the hyperedges in $\widehat{E}$ are the  maximal hyperedges in $H$. Clearly $\rho(H)=\rho(\widehat{H})$, so when dealing with the covering number we can work with Sperner hypergraphs.
\par
We refer to  \cite{Berge} and \cite{Bretto} for more information.
\par

There is always a hypergraph $H(\sigma) = (V,E)$ canonically associated to any  Boolean function
$\sigma:\mathcal{P}(Z)\to\{0,1\}$, with vertex set $V=Z$ and $E=\{e\subseteq Z:\sigma(e)=0\}$.

\begin{remark}\label{remarkhyper1}
 Notice that for every subset $U\subseteq Z$, $\sigma(U)=0$ if and only if $U=e$ for some $e\in E$. Hence, ${H}(\sigma)$ describes completely the Boolean function $\sigma$. In fact, in this way any hypergraph $H$ describes a Boolean function, denoted by $\sigma(H)$.
Also observe that a hypergraph may be associated to any function $\tau:\mathcal A\to\{0,1\}$ for a family of subsets $\mathcal A\subseteq \mathcal{P}(Z)$.
\end{remark}

Taking into account the previous considerations, the following result is easily checked.

\begin{prop}
	The $\sigma$-category defined by a Boolean function $\sigma$ coincides with $\rho(H(\sigma))$, the covering number of $H(\sigma)$.
\end{prop}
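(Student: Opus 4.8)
The plan is to unwind both definitions and observe they literally describe the same minimization problem. First I would recall that $\sigma$-$cat(Z)$ is, by definition, the smallest $n$ for which there is a covering $\{U_i\}_{i=1}^n$ of $Z$ with $\sigma(U_i)=0$ for all $i$; and that $\rho(H(\sigma))$ is, by definition, the smallest cardinality of a family of hyperedges of $H(\sigma)=(V,E)$ with $V=Z$ and $E=\{e\subseteq Z:\sigma(e)=0\}$ whose union is $V$. So the proof is a two-way containment between the set of admissible families in each problem.

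For the inequality $\sigma\text{-}cat(Z)\le\rho(H(\sigma))$: take a minimal hyperedge covering $\{E_{i_1},\dots,E_{i_m}\}$ of $H(\sigma)$, so $m=\rho(H(\sigma))$, each $E_{i_j}\in E$ and $\bigcup_j E_{i_j}=Z$. By the description of $E$ in Remark \ref{remarkhyper1}, $\sigma(E_{i_j})=0$ for each $j$, so $\{E_{i_j}\}_{j=1}^m$ is a covering of $Z$ by $\sigma$-compatible sets, whence $\sigma\text{-}cat(Z)\le m$. Conversely, for $\rho(H(\sigma))\le\sigma\text{-}cat(Z)$: take an optimal covering $\{U_i\}_{i=1}^n$ of $Z$ with $n=\sigma\text{-}cat(Z)$ and $\sigma(U_i)=0$ for all $i$. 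Again by Remark \ref{remarkhyper1}, $\sigma(U_i)=0$ forces $U_i\in E$, so each $U_i$ is a hyperedge of $H(\sigma)$, and since they cover $Z=V$ they form a hyperedge covering; hence $\rho(H(\sigma))\le n$. Combining the two inequalities gives the equality.

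There is essentially no obstacle here; the only thing requiring a word of care is the trivial/degenerate cases. One should note that $H(\sigma)$ is a legitimate hypergraph in the sense defined in the text — i.e. $E\neq\emptyset$ and $\bigcup E=Z$ — precisely when singletons are $\sigma$-compatible (so that the one-element subsets lie in $E$ and cover $Z$), which is exactly the standing hypothesis that makes $\sigma\text{-}cat(Z)$ finite; if no covering by $\sigma$-compatible sets exists then both sides are $\infty$ (or undefined) and the statement is vacuous. Also, passing to the Sperner subhypergraph $\widehat{H(\sigma)}$ does not change anything since $\rho(H)=\rho(\widehat H)$, as already remarked, so one may phrase the conclusion either way. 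I would therefore present the proof as the short symmetric double-inclusion argument above, with a single sentence dispatching the degenerate case.
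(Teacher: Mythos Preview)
Your proof is correct and is exactly the verification the paper has in mind: the paper simply states that, in light of the preceding definitions and Remark \ref{remarkhyper1}, the result ``is easily checked,'' and your double-inclusion argument is precisely that check spelled out in full.
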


Given any finite $T_0$-space $X$, the previous proposition applies to the Boolean functions $\sigma_{gcat}$ on $Z= X$, $\sigma_{cat}$ and $\sigma_{gcat_p}$ on $Z = Max (X)$ and $\sigma_{Cat_u}$ on $Z = X_0$ (the core of $X$). Therefore, the knowledge of these parameters provide the knowledge of the corresponding covering number of the associated hypergraph  and conversely. It has been established that for a given hypergraph, computing its covering number can be solved in polynomial time by using Boolean functions defined  over it hyperedges set. However, finding a   covering with at most $k$ hyperedges is an $NP$-complete problem whenever $k$ is not fixed, \cite{FiGoPi18}.

 \par
 Nevertheless, whenever the compatibility structure of $Z$ is completely known, the associated  hypergraph $H(\sigma)$ covering number and hence the $\sigma$-category are bounded. For instance, according to  \cite{MoKo04}, for a hypergraph $H$ of $n$ vertices and $m$ hyperedges such that any  vertex lies in at least $b\geq 1$ hyperedges and every hyperedge size is at most $a$, it is verified
  $$\frac{n}{a}\leq \rho(H)\leq \frac{ln (ml/bn)}{ln(1-b/n)}+\frac{m}{b}\sum_{1\leq j\leq l}\frac{1}{j}$$
  for any integer $l$. Other  bounds for some particular situations can be found in \cite{MoKo04}.

\par
The covering number can be reformulated as in terms of the transversal number of a hypergraph, which we now define. Recall that, given a hypergraph $H = (V,E)$,  a set of vertices $B\subset V$ is said to be a \emph{transversal} if it meets every hyperedge $e\in E$. The \emph{transversal number} $\tau(H)$ is the minimum cardinality of a transversal. Each transversal of $H$ corresponds a covering of the dual hypergraph $H^d=(V^d,E^d)$ of $H$, and viceversa. Recall that the \emph{dual hypergraph} $H^d$ has $E$ as its vertex set, $V^d=E$, and a hyperedge in $E^d$ is a family $e^d_v = \{e\in E/v\in e\}$ for a given $v\in V$ (possibly $e^d_v = e^d_w$ for $w\neq v$), see \cite{Lawler}. This way  $\rho(H)=\tau(H^d)$. In particular determining different versions of the category of a finite $T_0$-space turns to be equivalent to determining a transversal number from the viewpoint of graph theory.
\par
These problems are in general $NP$-complete, see \cite{GrayJohnson}. Furthermore, they are related to computing the so called transversal hypergraph and hence to enumerate maximal set of independent sets in a hypergraph \cite{Boros}, or to solve the Boolean function dualization problem
 \cite{Eiter}. These invariants have many applications in Computer Science and in particular to Logic and Artificial Intelligence, as described in \cite{EiterGottlob}. According to \cite{MengFengLi}, the best known algorithm for solving the hypergraph transversal problem is due to Fredman and Khachiyan \cite{Fredman}. Actually, in \cite{MengFengLi} an algorithm for computing the minimum covering (and transversal) number is obtained based on the semi-tensor product of matrices, see \cite{cheng1} and \cite{cheng2}.

 Observe that, by Lemma \ref{lem1} and Remark \ref{crown}, we have a clear criterium about the contractibility of a subspace of a finite $T_0$-space $X$ of height $1$, so it is tempting to recreate the monotonic Boolean function $\sigma(H)$ induced by a Sperner hypergraph $H = (V,E)$ as the monotonic Boolean function associated to the LS-category of a suitable $X$  with $Max(X) = V$.  The following example shows that it is not possible to define such a space.
\begin{example}
\label{Exhyper1}
Consider the hypergraph $H_1=(V_1,E_1)$, where $V_1=\{1,2,3,4,5\}$ and \newline$E_1=\mathcal{P}(V_1)-\{\emptyset, \{1,2,3,4\},\{1,2,3,5\},\{1,2,3,4,5\}\}$. Its Sperner hypergraph has as hyperedges $\hat{E}_1=\{\{1,2,3\}, \{1,2,4,5\}, \{1,3,4,5\}, \{2,3,4,5\}\}$. If there was a finite $T_0$-space $X$ of height $1$ so that $\sigma_{cat}=\sigma(H_1)$, then $\sigma_{cat}$ would be zero on all subsets except for $\{1,2,3,4\}$, $\{1,2,3,5\}$ and $\{1,2,3,4,5\}$. This would mean that $U=U_1\cup U_2\cup U_3\cup U_4$ and $V=U_1\cup U_2\cup U_3\cup U_5$ would contain crowns $X(1)$ and $X(2)$ respectively, as depicted in Figure \ref{dibExhyper1} (see Remark \ref{crown}). Notice that in both of them the order of appearance of the vertices $\{1,2,3\}$ must be same otherwise $\sigma_{cat}(U_2\cup U_3)=1$.

\begin{figure}[ht]
\begin{center}
	\psfrag{v1}{$1$}\psfrag{v2}{$2$}\psfrag{v3}{$3$}\psfrag{v4}{$4$}\psfrag{v5}{$5$}\psfrag{d1}{$d(1)$}\psfrag{d2}{$d(2)$}\psfrag{d3}{$d(3)$}\psfrag{d4}{$d(4)$}\psfrag{d5}{$d(5)$}\psfrag{d1'}{$d(1')$}\psfrag{d2'}{$d(2')$}\psfrag{d3'}{$d(3')$}\psfrag{d4'}{$d(4')$}\psfrag{d5'}{$d(5')$}\psfrag{x1}{$X(1)$}\psfrag{x2}{$X(2)$}
	\psfig{figure=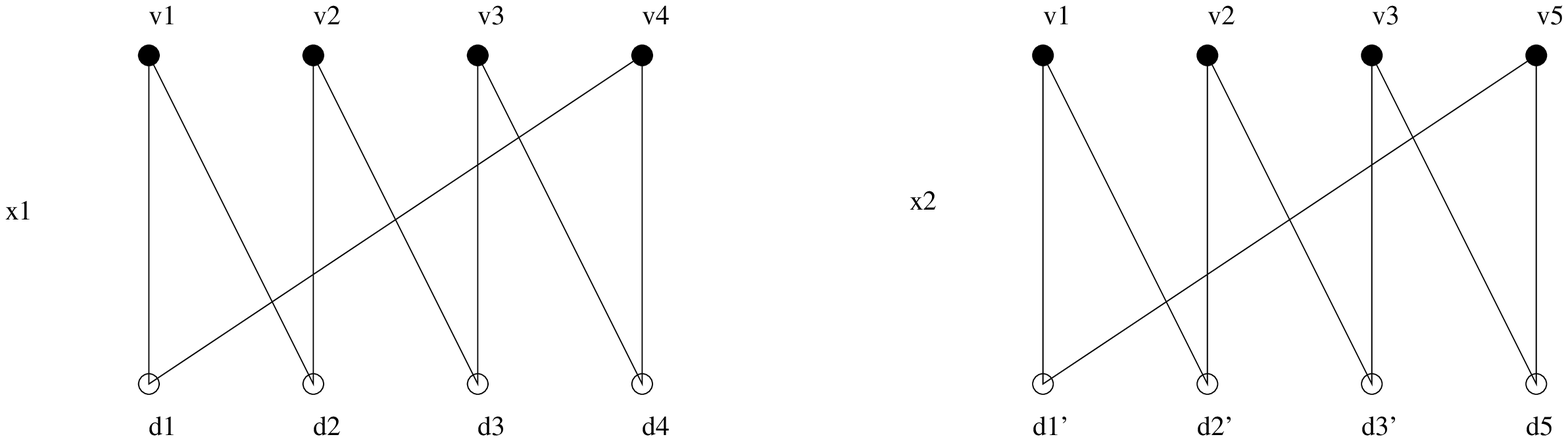,height=3cm}
\end{center}
	\caption{}
\label{dibExhyper1}
\end{figure}

	In these conditions, it should be true that $d(2)=d(2')$ and $d(3)=d(3')$ otherwise $\sigma_{cat}(U_1\cup U_2)=1$ and $\sigma_{cat}(U_2\cup U_3)=1$. Now we study the cases when $d(1)$ and $d(1')$ are or not equal and when $d(4)$ and $d(5)$ are or not equal. In any of the four possible cases we obtain subsets on which $\sigma_{cat}$ takes on the value $1$ when it should not.
\end{example}

It is unknown to the authors whether there exists fixed $k>1$ such the Boolean function $\sigma(H)$ associated to an arbitrary $H$ can be always modeled by the Boolean function associated to the LS-category of a finite space of height $k$. The same conclusion would be obtained regarding the computation of the covering number of a hypergraph and, as apparently every combinatorial problem can be reformulated as the determination of the covering number \cite{Furedi} this would reduce such problems to the calculation of the LS-category of a finite $T_0$-space of height $k$.

In the above recreation of a hypergraph, the collections of subsets of the vertices which are not subsets of any hyperedge play an important role. In the example described above these subsets and the intersections among them have large cardinals with respect to the cardinal of the set of vertices. It seems that if these subsets were small such a recreation would be plausible. The following is an example of such phenomenon.
\begin{example}\label{Exhyper2}
	Let $H_2=(V_2,E_2)$ be the hypergraph with $V_2=\{0,1,2,3,4\}$ and \newline
	$E_2=\mathcal{P}(V_2)-\{\emptyset, U \subseteq V_2 \textrm{ } | \textrm{ } U \mbox{ contains } \{0,1\}, \{1,2\}, \{2,3\},\{3,4\} \mbox{ or } \{0,4\}\}$. Its Sperner hypergraph has as hyperedges $\hat{E}_2=\{\{0,2\},\{0,3\},\{1,3\},\{1,4\}, \{2,4\}\}$. Then the height $1$ finite $T_0$ space $X$ whose Hasse diagram is depicted below has $\sigma_{cat}=\sigma(H_2)$. Its Hasse diagram is the union of the Hasse diagrams of the spaces (crowns)  $X(i,i+1)$ considered as directed graphs where the indices are $i=0,1,\dots, 4\ mod\ 5$.
	\begin{figure}[ht]
\begin{center}
	\psfrag{1}{$1$}\psfrag{2}{$2$}\psfrag{3}{$3$}\psfrag{4}{$4$}\psfrag{0}{$0$}\psfrag{d1}{$d(1)$}\psfrag{d2}{$d(2)$}\psfrag{d3}{$d(3)$}\psfrag{d4}{$d(4)$}\psfrag{d0}{$d(0)$}\psfrag{xi}{$X(i,i+1)\equiv$}\psfrag{x}{$X\equiv$}\psfrag{i}{$i$}\psfrag{i1}{$i+1$}\psfrag{di}{$d(i)$}\psfrag{di1}{$d(i+1)$}
	\psfig{figure=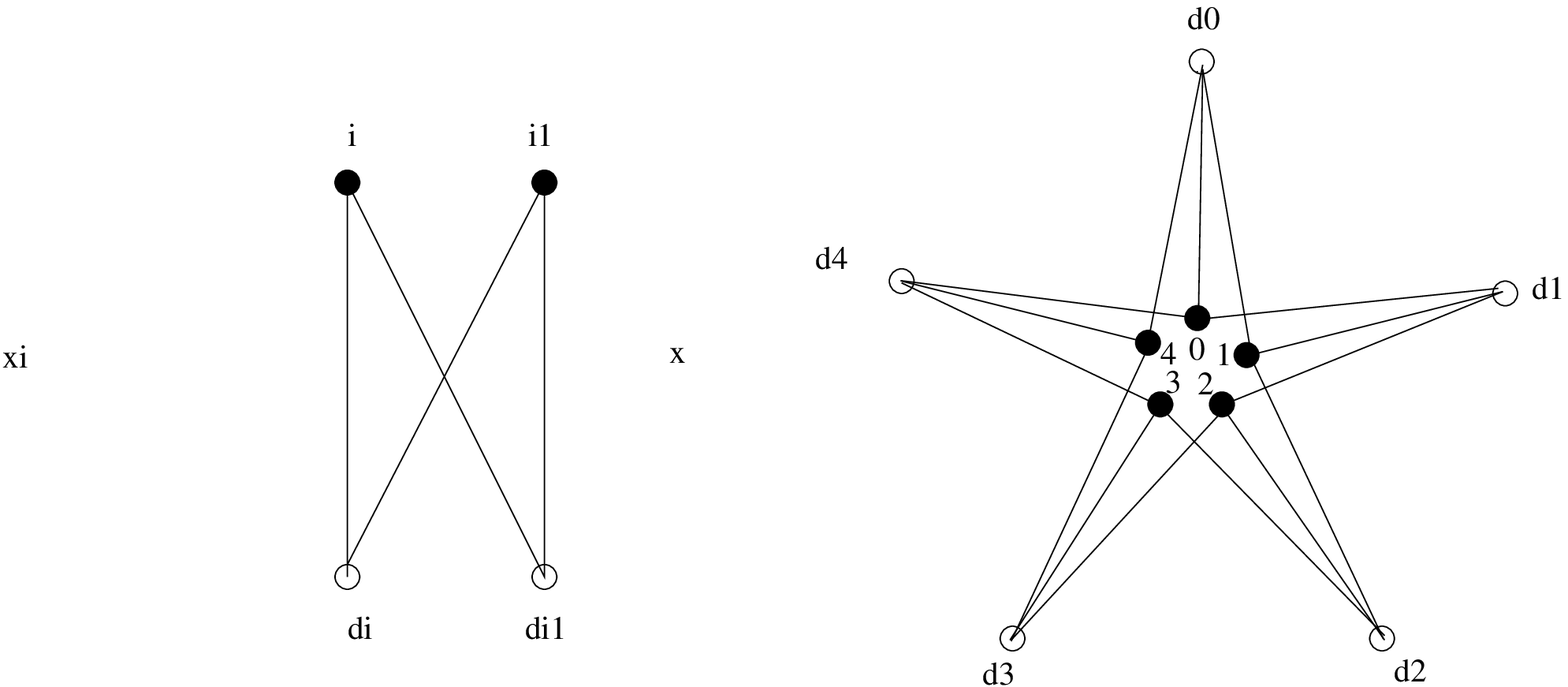,height=5.5cm}
\end{center}
	\caption{}
\label{dibExhyper2}
\end{figure}
\end{example}

We finish with a question that derives from the previous considerations. It seems to be difficult and interesting.

\textbf{Question.} Identify those hypergraphs $H$ for which there exists a finite $T_0$-space of height $1$ with $\sigma_{cat}=\sigma(H)$. That would help to characterize those finite $T_0$-spaces for which the computation of its $LS$-category can be reduced to that of a height $1$ space, where the homotopy issues are under control.

\bibliography{finiteLS}

\newcommand{\noopsort}[1]{} \newcommand{\printfirst}[2]{#1}
  \newcommand{\singleletter}[1]{#1} \newcommand{\switchargs}[2]{#2#1}
\begin{thebibliography}{10}

\bibitem{Sco13}
S.~Aaronson and N.~Scoville.
\newblock Lusternik-{S}chnirelmann category for simplicial complexes.
\newblock {\em Illinois J. Math.}, 57(3):743--753, 2013.

\bibitem{Alex37}
P.~{Alexandroff}.
\newblock {Diskrete R\"aume.}
\newblock {\em {Rec. Math. Moscou, n. Ser.}}, 2:501--519, 1937.

\bibitem{Ba11}
J.~Barmak.
\newblock {\em Algebraic topology of finite topological spaces and
  applications}, volume 2032 of {\em Lecture Notes in Mathematics}.
\newblock Springer, Heidelberg, 2011.

\bibitem{Barmak}
J.~Barmak.
\newblock {\em Algebraic topology of finite topological spaces and
  applications}, volume 2032 of {\em Lecture Notes in Mathematics}.
\newblock Springer, Heidelberg, 2011.

\bibitem{Berge}
C.~Berge.
\newblock {\em Hypergraphs}, volume~45 of {\em North-Holland Mathematical
  Library}.
\newblock North-Holland Publishing Co., Amsterdam, 1989.
\newblock Combinatorics of finite sets, Translated from the French.

\bibitem{Boros}
E.~Boros, V.~Gurvich, L.~Khachiyan, and K.~Makino.
\newblock Generating partial and multiple transversals of a hypergraph.
\newblock In {\em Automata, languages and programming ({G}eneva, 2000)}, volume
  1853 of {\em Lecture Notes in Comput. Sci.}, pages 588--599. Springer,
  Berlin, 2000.

\bibitem{Bretto}
A.~Bretto.
\newblock {\em Hypergraph theory}.
\newblock Mathematical Engineering. Springer, Cham, 2013.

\bibitem{cheng1}
D.~Cheng and H.~Qi.
\newblock Semi-tensor product of matrices-theory and applications.
\newblock {\em Beijing: Science Press}, 2007.

\bibitem{cheng2}
D.~Cheng, H.~Qi, and Z.~Li.
\newblock {\em Analysis and control of {B}oolean networks}.
\newblock Communications and Control Engineering Series. Springer-Verlag
  London, Ltd., London, 2011.

\bibitem{clader09}
E.~Clader.
\newblock Inverse limits of finite topological spaces.
\newblock {\em Homology Homotopy Appl.}, 11(2):223--227, 2009.

\bibitem{CLOT03}
O.~Cornea, G.~Lupton, J.~Oprea, and D.~Tanr\'{e}.
\newblock {\em Lusternik-{S}chnirelmann category}, volume 103 of {\em
  Mathematical Surveys and Monographs}.
\newblock American Mathematical Society, Providence, RI, 2003.

\bibitem{DuffusRival}
D.~Duffus and I.~Rival.
\newblock Crowns in dismantlable partially ordered sets.
\newblock In {\em Combinatorics ({P}roc. {F}ifth {H}ungarian {C}olloq.,
  {K}eszthely, 1976), {V}ol. {I}}, volume~18 of {\em Colloq. Math. Soc.
  J\'{a}nos Bolyai}, pages 271--292. North-Holland, Amsterdam-New York, 1978.

\bibitem{EiterGottlob}
T.~Eiter and G.~Gottlob.
\newblock Hypergraph transversal computation and related problems in logic and
  {AI}.
\newblock In {\em Logics in artificial intelligence}, volume 2424 of {\em
  Lecture Notes in Comput. Sci.}, pages 549--564. Springer, Berlin, 2002.

\bibitem{Eiter}
T.~Eiter, K.~Makino, and G.~Gottlob.
\newblock Computational aspects of monotone dualization: a brief survey.
\newblock {\em Discrete Appl. Math.}, 156(11):2035--2049, 2008.

\bibitem{vilches15}
D.~Fern\'{a}ndez-Ternero, E.~Mac\'{i}as-Virg\'{o}s, and J.~A. Vilches.
\newblock Lusternik-{S}chnirelmann category of simplicial complexes and finite
  spaces.
\newblock {\em Topology Appl.}, 194:37--50, 2015.

\bibitem{FiGoPi18}
W.~Fischl, G.~Gottlob, and R.~Pichler.
\newblock General and fractional hypertree decompositions: Hard and easy cases.
\newblock In {\em Proceedings of the 37th ACM SIGMOD-SIGACT-SIGAI Symposium on
  Principles of Database Systems}, pages 17--32, 2018.

\bibitem{Fredman}
M.~Fredman and L.~Khachiyan.
\newblock On the complexity of dualization of monotone disjunctive normal
  forms.
\newblock {\em J. Algorithms}, 21(3):618--628, 1996.

\bibitem{Furedi}
Z.~F\"{u}redi.
\newblock Matchings and covers in hypergraphs.
\newblock {\em Graphs Combin.}, 4(2):115--206, 1988.

\bibitem{GrayJohnson}
M.~Garey and D.~Johnson.
\newblock {\em Computers and intractability}.
\newblock W. H. Freeman and Co., San Francisco, Calif., 1979.

\bibitem{Simplicialcomplexity}
J.~Gonz\'{a}lez.
\newblock Simplicial complexity: piecewise linear motion planning in robotics.
\newblock {\em New York J. Math.}, 24:279--292, 2018.

\bibitem{Gries}
D.~Gries, A.~J. Martin, J.~L.~A. van~de Snepscheut, and J.~T. Udding.
\newblock An algorithm for transitive reduction of an acyclic graph.
\newblock {\em Science of Computer Programming}, 12:151--155, 1989.

\bibitem{kukiela10}
M.~Kukiela.
\newblock On homotopy types of {A}lexandroff spaces.
\newblock {\em Order}, 27(1):9--21, 2010.

\bibitem{Lawler}
E.~Lawler.
\newblock Covering problems: {D}uality relations and a new method of solution.
\newblock {\em SIAM J. Appl. Math.}, 14:1115--1132, 1966.

\bibitem{may3}
J.P. May.
\newblock Finite spaces and larger contexts.
\newblock Preliminary draft submitted to the AMS, 2014.

\bibitem{mcCord66}
M.~McCord.
\newblock Singular homology groups and homotopy groups of finite topological
  spaces.
\newblock {\em Duke Math. J.}, 33:465--474, 1966.

\bibitem{MengFengLi}
M.~Meng, J.~Feng, and X.~Li.
\newblock A matrix method to hypergraph transversal and covering problems with
  application in simplifying boolean functions.
\newblock In {\em Control Conference (CCC), 2016 35th Chinese}, pages
  2772--2777. IEEE, 2016.

\bibitem{MoKo04}
C.~Moreira and Y.~Kohayakawa.
\newblock Bounds for optimal coverings.
\newblock {\em Discrete Appl. Math.}, 141(1-3):263--276, 2004.

\bibitem{raptis10}
G.~Raptis.
\newblock Homotopy theory of posets.
\newblock {\em Homology Homotopy Appl.}, 12(2):211--230, 2010.

\bibitem{Rival}
I.~Rival.
\newblock A fixed point theorem for finite partially ordered sets.
\newblock {\em J. Combinatorial Theory Ser. A}, 21(3):309--318, 1976.

\bibitem{Stong66}
R.~Stong.
\newblock Finite topological spaces.
\newblock {\em Trans. Amer. Math. Soc.}, 123:325--340, 1966.

\bibitem{Tanaka}
K.~Tanaka.
\newblock Lusternik-{S}chnirelmann category for cell complexes and posets.
\newblock {\em Illinois J. Math.}, 59:623--636, 2015.

\bibitem{tanaka2}
Kohei Tanaka.
\newblock Lusternik-{S}chnirelmann category of relation matrices on finite
  spaces and simplicial complexes.
\newblock {\em Fund. Math.}, 249(2):149--167, 2020.

\end{thebibliography}
\bibliographystyle{plain}

\end{document}